\documentclass[10pt,reqno]{amsart}
\usepackage{amsmath, amssymb, multicol, url, thmtools, thm-restate, enumitem}

\allowdisplaybreaks[2]

\theoremstyle{plain}
\newtheorem{theorem}{Theorem}
\newtheorem{lemma}[theorem]{Lemma}
\newtheorem{corollary}[theorem]{Corollary}

\theoremstyle{remark}
\newtheorem*{remark*}{Remark}

\begin{document}

\title[On the order sequence of an embedding of the Ree curve]{On the order sequence of an embedding of the Ree curve}

\author{Dane C. Skabelund}
\address{
Department of Mathematics\\
University of Illinois\\
Urbana, IL 61801\\
U.S.A.}
\email{skabelu2@illinois.edu}

\begin{abstract}
  In this paper we compute the Weierstrass order-sequence associated with a certain linear series on the Deligne-Lusztig curve of Ree type.
  As a result, we determine that the set of Weierstrass points of this linear series consists entirely of $\mathbb F_q$-rational points.
\end{abstract}

\maketitle

\section{Introduction}

Let $X$ be a smooth, geometrically irreducible, projective algebraic curve defined over a finite field $\mathbb F_q$ of characteristic $p$, and let
\[
m(t) = t^n + a_{n-1}t^{n-1} + \cdots + a_1t + a_0 \in \mathbb{Z}[t]
\]
be the square-free part of the characteristic polynomial of the Frobenius endomorphism $\operatorname{Fr}_q$ on the Jacobian of $X$.
Then for any $P, P_0 \in X$ with $P_0$ an $\mathbb F_q$-rational point, we have the fundamental linear equivalence \cite{HKT}
\begin{equation} \label{fundequiv}
  m(\operatorname{Fr}_q)(P)
  = \operatorname{Fr}_q^n(P) + \cdots + a_1\operatorname{Fr}_q(P) + a_0 P
  \sim m(1) P_0 .
\end{equation}
Thus for $m = |m(1)|$, the linear series $\mathcal D_X := |mP_0|$, sometimes called the \emph{Frobenius linear series}, is independent of the choice of rational point $P_0$, and is completely determined by the zeta function $Z_X(t)$.

The linear series $\mathcal D_X$ is a useful tool for studying curves with many rational points.
It has been used to study \emph{$\mathbb F_q$-maximal curves}, that is, curves defined over $\mathbb F_q$ whose number of $\mathbb F_q$-rational points attains the Hasse-Weil bound \cite{FGT97}, \cite{AT05}, \cite{FG09}, \cite{FG10}, as well as \emph{$\mathbb F_q$-optimal curves}, which have the greatest number of $\mathbb F_q$-rational points among curves of their genus \cite{FT98}.

Notable among the latter group are the Hermitian, Suzuki, and Ree curves, which are the Deligne--Lusztig curves associated to the simple groups ${}^2A_2$, ${}^2B_2$, and ${}^2G_2$.
These curves satisfy strong uniqueness properties.
Each is characterized among curves over $\mathbb F_q$ by its genus, number of rational points, and automorphism group \cite{HP93}.
Moreover, it can be shown that the genus and number of rational points alone are sufficient to characterize the Hermitian and Suzuki curves \cite{RS94}, \cite{FT98}.
Whether this is also the case for the Ree curve remains an open question---one  which  was the initial motivation for the work in the current paper.

For fixed $s \geq 1$, let $q_0 = 3^s$ and $q = 3^{2s+1}$.
The Ree curve $X = X(q)$ over $\mathbb F_q$ has a singular affine model given by the two equations
\begin{equation}\label{yz}
 y^q - y = x^{q_0}(x^q-x), \qquad z^q - z = x^{q_0}(y^q-y) ,
\end{equation}
and has genus $g = \frac 32q_0(q-1)(q+q_0+1)$ and $N = q^3+1$ points defined over $\mathbb F_q$.
Weil--Serre's explicit formulas can be used to show that $X$ is $\mathbb F_q$-optimal, and that any curve defined over $\mathbb F_q$ with this $g$ and $N$ has $L$-polynomial
\begin{equation}\label{Lpoly}
  L_X(t)
  = (1+3q_0t+qt^2)^{q_0(q^2-1)}(1+qt^2)^{\frac 12 q_0(q-1)(q+3q_0+1)} .
\end{equation}
Since the characteristic polynomial of $\operatorname{Fr}_q$ is $t^{2g} L_X(1/t)$, we obtain
\begin{align*}
  m(t)
  &= (t^2+3q_0t+q)(t^2+q) \\
  &= t^4 + 3q_0t^3 + 2qt^2 + 3qq_0t  + q^2, 
\end{align*}
and $\mathcal D_X = |mP_0|$ with $m = m(1) = 1 + 3q_0 + 2q + 3qq_0 + q^2$.

There is a subseries $\mathcal D \subset \mathcal D_X$ of projective dimension 13 which is invariant under $\operatorname{Aut}(X)$.
In \cite{DE15}, Duursma and Eid show that $\mathcal D$ is very ample, giving a smooth embedding of $X$ in $\mathbb P^{13}$.
They also find 105 equations describing the image of this embedding, and use these to compute the Weierstrass semigroup at a rational point when $s=1$.
In this case, it follows from their work that $\mathcal D = \mathcal D_X$ is a complete linear series.
Whether or not $\mathcal D$ is complete for $s \geq 2$ is unknown at present.
In \cite{Kane16}, Kane also gives an embedding of $X$ in $\mathbb P^{13}$ that arises from the abstract theory of Deligne--Lusztig varieties. The exact relationship between the two embeddings is not immediately clear.

In this paper we determine the order sequence of $\mathcal D$, that is, the orders of vanishing of sections of $\mathcal D$ at a general point.
Equivalently, these are the intersection multiplicities of hyperplane sections of $X$ embedded in $\mathbb P^{13}$ at a general point.
We prove the following theorem.
\begin{restatable}{theorem}{Dthm}
\label{Dthm}
The orders of $\mathcal D$ are
\[
  0,\; 1,\; q_0,\; 2q_0,\; 3q_0,\; q,\; q+q_0,\; 2q,\; qq_0,\; qq_0+q_0,\; qq_0+q,\; 2qq_0,\; 3qq_0,\; q^2 .
\]
Since $\mathcal D \subset \mathcal D_X$, these form a subset of the orders of $\mathcal D_X$.
\end{restatable}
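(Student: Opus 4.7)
My plan is to compute the Taylor expansions at a generic point of an explicit basis of $\mathcal{D}$, using the Frobenius structure of the Ree curve to force most coefficients to vanish, and then extract the order sequence by linear algebra. I would fix a generic point $P \in X$ and use $t = x - x(P)$ as local parameter.

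Differentiating the defining relations (\ref{yz}) in characteristic 3 gives $dy = x^{q_0}\,dx$ and $dz = x^{2q_0}\,dx$, while the functional equations $y^q - y = x^{q+q_0} - x^{q_0+1}$ and $z^q - z = x^{q+2q_0} - x^{2q_0+1}$ let one recursively solve for the Taylor coefficients of $y$ and $z$ at $P$. The key observation is that the coefficient of $t^i$ on each right-hand side is, up to a power of $x(P)$, a binomial coefficient whose value modulo 3 is controlled by Lucas's theorem, so it vanishes except at a sparse set of indices determined by the base-3 expansions of $q$ and $q_0$. Because $(\sum a_i t^i)^q = \sum a_i^q t^{iq}$ in characteristic $p$, this sparsity propagates through the recursion to all higher-order Taylor coefficients.

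Given these expansions, I would use the Duursma--Eid description of $\mathcal{D}$ by 14 explicit functions built from $x, y, z$, and form linear combinations producing, for each of the 14 claimed orders, a section of $\mathcal{D}$ whose leading Taylor term at $P$ is the corresponding power of $t$. The orders $0$ and $1$ come from the constant function $1$ and from $y - y(P)$, whose Taylor expansion starts at $x(P)^{q_0}\,t$; the orders $q_0, 2q_0, 3q_0$ are produced by further subtractions among $y, z$, and monomial corrections in $x$; the larger orders $q, q+q_0, \ldots, q^2$ follow by applying the $q$-th power Frobenius to already-constructed sections and recombining with appropriate lower-order correction terms. Since the 14 sections have 14 distinct leading orders matching the claimed list and $\dim \mathcal{D} = 14$, they necessarily form a basis adapted to the Taylor filtration at $P$, and the generic order sequence is exactly the one stated; equivalently, the $14 \times 14$ Wronskian-type determinant of Hasse derivatives $\bigl(D_t^{(\epsilon_i)} g_j\bigr)(P)$ is nonzero.

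The main obstacle is building sections that realize the orders $3q_0$, $2q$, $qq_0 + q_0$, $2qq_0$, and $3qq_0$, none of which is the leading order of any individual monomial $x^a y^b z^c$. These require exact cancellation of multiple leading terms across several basis elements, and verifying the cancellations depends on careful binomial coefficient computations modulo 3. Lucas's theorem and the $p$-adic closure properties of Weierstrass order sequences keep the bookkeeping systematic.
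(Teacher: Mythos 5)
Your overall frame is sound and is in fact equivalent to the paper's method: Taylor coefficients with respect to $t = x - x(P)$ at a generic $P$ are Hasse derivatives $D_x^i$ evaluated at $P$, and exhibiting $14$ sections with $14$ distinct leading orders is the same as exhibiting a nonsingular (triangular) $14\times 14$ matrix $[D^{\epsilon_i}f_j]$. The identities $dy = x^{q_0}dx$, $dz = x^{2q_0}dx$ and the sparsity of the supports via Lucas are exactly the paper's starting point. But two essential steps are missing. First, the order $q^2$: your proposed mechanism of ``applying the $q$-th power Frobenius to already-constructed sections and recombining'' does not stay inside $V_{\mathcal D}$ --- if $g \in V_{\mathcal D}$ then $g^q$ is generally not in the span of the $14$ basis functions, so this produces no section of $\mathcal D$ vanishing to order $q^2$ at a generic $P$. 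The paper has to prove that the image of $(1:x:w_1:w_2:w_3:w_6:w_8)$ lies on the hypersurface $\sum_{i+j=6}X_i^{q^2}X_j = 0$, use this to write down the osculating hyperplane section $g_P = \sum_{i+j=6}f_i^{q^2}(P)f_j$, and separately invoke Proposition 3.4 of \cite{FGT97} to cap the largest order at $q^2$. Nothing in your sketch substitutes for this.

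Second, the cancellations you defer to ``careful binomial coefficient computations modulo 3'' are not binomial arithmetic. Lucas's theorem and the $p$-adic criterion only shrink the set of candidate orders to a finite minimal list (e.g.\ $q+1$, $q+2q_0$, $q+3q_0$, $2q+q_0$, $3q$, $qq_0+1$, $qq_0+2q_0$, $qq_0+3q_0$, $qq_0+q+q_0$, $qq_0+2q$); to eliminate each candidate $j$ one must produce a linear relation $c_jD^jf + \sum_{i<j}c_iD^if = 0$ holding \emph{simultaneously for all $14$ basis functions}, equivalently show that the exact cancellation you need occurs for every adapted section at once. These ten relations rest on the functional equations $w_i^q - w_i = \cdots$ of Pedersen, the identity $f^q - f = (x^q-x)D^1f$, and an induction up the tower $x, y, z, w_4, w_7, w_5, w_9, w_{10}$ distinguishing the two structural types of $w_i$; their verification is the bulk of the argument and cannot be absorbed into Lucas-type bookkeeping. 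You would also need to check that the leading (diagonal) Taylor coefficients of your $14$ sections are generically nonzero, and note that the orders $2qq_0$ and $3qq_0$ are most naturally obtained by elimination (every surviving candidate dominates them in the $3$-adic partial order) rather than by direct construction, which is how the paper closes the count.
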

As a consequence, we show in the final section that the Weierstrass points of $\mathcal D$ consist of the $\mathbb F_q$-rational points of $X$.\\

\emph{Acknowledgements:} I would like to thank the anonymous referee for his/her detailed comments which improved the exposition of this paper.

\section{Background}

The theory of Weierstrass points in characteristic $p$ was developed first by F.K. Schmidt \cite{Schmidt}.
We briefly give the necessary definitions and results on the subject following the presentation in the paper of St\"ohr and Voloch \cite{SV86}.

Given a base-point-free linear series $\mathcal D$ on $X$ of dimension $r$ and degree $d$, and $P$ a point of $X$, the \emph{$(\mathcal D,P)$-orders} consist of the sequence
\[
 0 = j_0(P) < j_1(P) < \cdots < j_r(P) \leq d
\]
of integers $j_i$ such that there is a hyperplane in $\mathcal D$ intersecting $P$ with multiplicity equal to $j_i$.
These are the same for all but finitely many points $P \in X$, called $\mathcal D$-\emph{Weierstrass points}.
The generic values of the $j_i(P)$ are the \emph{$\mathcal D$-orders}
\[
 0 = \epsilon_0 < \epsilon_1 < \cdots < \epsilon_r .
\]
This order sequence may be computed by choosing the $\epsilon_i$ lexicographically smallest so that
\[
  ( D_x^{\epsilon_i} f_0: D_x^{\epsilon_i} f_1 : \cdots : D_x^{\epsilon_i} f_r),
  \qquad i = 1,\ldots,r,
\]
are linearly independent in $\mathbb P^r_{\mathbb F_q(X)}$, where $f_0,f_1,\ldots,f_r$ is a basis for $\mathcal D$, and the $D_x^i$ are Hasse derivatives taken with respect to some fixed separating variable $x$.

The Hasse derivatives $D_x^i$ are defined on $\mathbb F_q(x)$ by
\[
  D_x^i x^j = \binom ji x^{j-i},
\]
and extend to derivations on $\mathbb F_q(X)$ satisfying the properties
\[
  D_x^k(fg) = \sum_{i+j=k} (D_x^if)( D_x^j g)
  \qquad\text{and}\qquad
D_x^k f^p = \begin{cases}
               (D_x^{k/p}f)^p & \text{if } p \mid k  \\
	       0 & \text{otherwise}
              \end{cases}
\]
for any $f,g \in \mathbb F_q(X)$.
In view of this second property, it will be often be convenient to write $D_x^{k/q}$ for $k/q$ is a rational number with denominator a power of $p$, adopting the convention that $D_x^{k/q} = 0$ when $k/q$ is not an integer.
Furthermore, when the choice of separating variable $x$ is clear from context, we omit the subscript and write simply $D^i$.

The following ``$p$-adic criterion'' for $\mathcal D$-orders is quite useful.
\begin{lemma}[\cite{SV86}, Corollary 1.9]
  If $\epsilon$ is a $\mathcal D$-order and $\binom{\epsilon}{\mu} \not\equiv 0 \bmod p$, then $\mu$ is also a $\mathcal D$-order.
\end{lemma}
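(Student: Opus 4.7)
The plan is to prove the contrapositive: assuming $\mu$ is not a $\mathcal{D}$-order, deduce that either $\epsilon$ is also not a $\mathcal{D}$-order or $\binom{\epsilon}{\mu} \equiv 0 \bmod p$. I would rely on the characterization recalled just above the lemma: for a basis $f_0, \ldots, f_r$ of $\mathcal{D}$, an integer $k$ is a $\mathcal{D}$-order if and only if the row vector $\mathbf{v}_k := (D_x^k f_0, D_x^k f_1, \ldots, D_x^k f_r)$ is not an $\mathbb{F}_q(X)$-linear combination of the $\mathbf{v}_j$ with $j < k$.

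The key computational input is the iteration identity $D_x^a \circ D_x^b = \binom{a+b}{a} D_x^{a+b}$, which I would verify on monomials $x^n$ using $\binom{n}{b}\binom{n-b}{a} = \binom{n}{a+b}\binom{a+b}{a}$, then extend to $\mathbb{F}_q(x)$ by $\mathbb{F}_q$-linearity and to $\mathbb{F}_q(X)$ via the standard extension of Hasse derivatives. Taking $a=\epsilon-\mu$ and $b=\mu$ gives $D_x^{\epsilon-\mu} D_x^\mu = \binom{\epsilon}{\mu} D_x^\epsilon$.

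Now, if $\mu$ is not a $\mathcal{D}$-order, there exist $a_k \in \mathbb{F}_q(X)$ with $D_x^\mu f_j = \sum_{k<\mu} a_k\, D_x^k f_j$ for every $j$. Applying $D_x^{\epsilon-\mu}$, the left-hand side becomes $\binom{\epsilon}{\mu} D_x^\epsilon f_j$. The right-hand side, expanded by the product rule $D_x^n(\phi\psi) = \sum_{i+i'=n} (D_x^i \phi)(D_x^{i'} \psi)$ and then collapsed via the iteration identity, becomes an $\mathbb{F}_q(X)$-linear combination of $D_x^{k'} f_j$ where $k' \le (\epsilon-\mu)+(\mu-1) = \epsilon - 1$, and where the scalar coefficients (built from $\binom{i'+k}{i'}$ and from $D_x^i a_k$) depend only on $k, i, i', \epsilon, \mu$, not on $j$. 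Hence $\binom{\epsilon}{\mu}\,\mathbf{v}_\epsilon$ lies in the $\mathbb{F}_q(X)$-span of $\{\mathbf{v}_{k'} : k' < \epsilon\}$, and if $\binom{\epsilon}{\mu} \not\equiv 0 \bmod p$ we may divide by it, contradicting that $\epsilon$ is a $\mathcal{D}$-order.

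The only place requiring care is bookkeeping in the final paragraph: one must confirm that the scalars appearing in the expansion of $D_x^{\epsilon-\mu}(a_k\, D_x^k f_j)$ are genuinely independent of the index $j$, so that the resulting identity is a single row-vector relation among the $\mathbf{v}_{k'}$ rather than a separate scalar relation for each coordinate. I do not anticipate a real obstacle beyond this, as the independence is immediate once the product rule is written out; the argument is essentially a short manipulation given the two Hasse-derivative identities recalled above.
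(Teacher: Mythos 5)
The paper does not actually prove this lemma---it is quoted with a citation to \cite{SV86}, Corollary 1.9---so there is no internal proof to compare against; your argument is a correct, self-contained reconstruction and is essentially the standard proof given in that reference (iterativity of Hasse derivatives plus the ``not in the span of earlier rows'' characterization of orders). The only step deserving an explicit word is that the identity $D_x^a\circ D_x^b=\binom{a+b}{a}D_x^{a+b}$, checked on monomials in $\mathbb F_q(x)$, passes to $\mathbb F_q(X)$ by uniqueness of the extension of an iterative higher derivation to a separable extension, which you correctly flag.
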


By Lucas's Theorem, the condition $\binom{\epsilon}{\mu} \not\equiv 0 \bmod p$ in the lemma is equivalent to saying that the coefficients in the $p$-adic expansion of $\epsilon$ are greater than or equal to those in the expansion of $\mu$.
When this is the case we write $\mu \leq_p \epsilon$.
This defines a partial order on the nonnegative integers.

The ($q$-)\emph{Frobenius orders} $0 = \nu_0 < \nu_1 < \cdots < \nu_{r-1}$ of $\mathcal D$ form a subsequence of the order sequence $\{\epsilon_i\}$, and are defined lexicographically smallest so that
\begin{align*}
  &(f_0^q:f_1^q:\cdots:f_r^q), \\
  &( D_x^{\nu_0} f_0: D_x^{\nu_0} f_1 : \cdots : D_x^{\nu_0} f_r), \\
  &\qquad\vdots \\
  &( D_x^{\nu_{r-1}} f_0: D_x^{\nu_{r-1}} f_1 : \cdots : D_x^{\nu_{r-1}} f_r)
\end{align*}
are linearly independent in $\mathbb P^r_{\mathbb F_q(X)}$.
There is exactly one $\mathcal D$-order $\epsilon_I$ which is omitted by the sequence $\{\nu_i\}$.
The geometric significance of the index $I$ is as follows:
it is the smallest $i \geq 0$ such that, for general $P$, the image of $P$ under the Frobenius endomorphism lies in the $i$th osculating space at $P$.
The Frobenius orders are closely connected with the $\mathbb F_q$-rational points of $X$, and are used in St\"ohr and Voloch's proof of the Riemann Hypothesis for curves over finite fields.

\begin{lemma}[\cite{SV86}, discussion preceding Proposition 2.3]\label{smallnu}
Let $(1:f_1:\cdots : f_r)$ be the morphism associated to $\mathcal D$.
Then the Frobenius orders of $\mathcal D$ which are less than $q$ are the first several orders of the morphism $(f_1-f_1^q: \cdots : f_r-f_r^q)$.
\end{lemma}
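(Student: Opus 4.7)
The plan is to compute the $\mathcal{D}$-orders directly, combining three ingredients: the Wronskian criterion via Hasse derivatives applied to an explicit basis, the $p$-adic criterion, and the Frobenius structure encoded in Lemma~\ref{smallnu} and the fundamental equivalence~\eqref{fundequiv}.

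First, I would fix an explicit basis $f_0 = 1, f_1, \ldots, f_{13}$ of $\mathcal{D}$ in $\mathbb{F}_q(X)$, adapted from the coordinate functions of the Duursma--Eid embedding into $\mathbb{P}^{13}$. Taking $x$ as a separating variable, the defining relations $y^q - y = x^{q_0}(x^q - x)$ and $z^q - z = x^{q_0}(y^q - y)$ give workable formulas for Hasse derivatives: for $0 < k < q$ the vanishing $D_x^k y^q = 0$ forces $D_x^k y = -D_x^k(x^{q_0+q} - x^{q_0+1})$, and a similar recursion applies to $z$. Consequently each $D_x^k f_j$ reduces, up to terms of the form $g^q$, to an explicit polynomial in $x$ with binomial-coefficient entries modulo $3$.

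Second, I would produce the claimed orders via two complementary routes. The fundamental equivalence $\operatorname{Fr}_q^4(P) + 3q_0\operatorname{Fr}_q^3(P) + 2q\operatorname{Fr}_q^2(P) + 3qq_0\operatorname{Fr}_q(P) + q^2 P \sim m P_0$ produces, for general $P$, a section of $\mathcal{D}_X$ vanishing to order exactly $q^2$ at $P$; confirming that this section (and others obtained by Frobenius twists) lies in the subseries $\mathcal{D}$ yields the large orders. The $p$-adic criterion then promotes the mixed orders $q+q_0$, $qq_0+q_0$, $qq_0+q$ to their subordinates $q_0$, $q$, $qq_0$, but---and this is where the bulk of the work lies---base-$3$ inspection shows that $1, 2q_0, 3q_0, q+q_0, 2q, qq_0+q_0, qq_0+q, 2qq_0, 3qq_0, q^2$ each have no $p$-adic superior within the target list, so each must be exhibited by its own construction. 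To pin down the Frobenius orders below $q$---namely $0, 1, q_0, 2q_0, 3q_0$---I would invoke Lemma~\ref{smallnu}: since $y - y^q$ and $z - z^q$ are rational functions of $x$ alone, the coordinates $f_j - f_j^q$ lie in $\mathbb{F}_q(x)$, and their order sequence reduces to an explicit Wronskian computation in a polynomial ring.

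Finally, a counting argument closes the proof: exhibiting 14 distinct nonnegative integers that are each $\mathcal{D}$-orders forces them to be the entire order sequence, since $\mathcal{D}$ has projective dimension $13$. The main technical obstacle I anticipate is producing the many independent witnesses required---particularly for the ``pure power of $3$'' orders $3q_0$, $3qq_0$, and $q^2$, each of which has only trivial $p$-adic subordinates. This demands delicate bookkeeping of Hasse derivatives of products of $x, y, z$, careful tracking of which binomial coefficients vanish modulo $3$, and verifying that each constructed section actually lies in the $14$-dimensional $\mathcal{D}$ rather than in the larger $\mathcal{D}_X$ (a nontrivial issue when $s \geq 2$, where it is not known whether $\mathcal{D} = \mathcal{D}_X$).
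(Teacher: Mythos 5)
Your proposal does not address the statement it is supposed to prove. Lemma~\ref{smallnu} is a general fact about the Frobenius orders of an arbitrary base-point-free linear series, quoted from St\"ohr--Voloch (the paper itself only cites it and does not reprove it); it asserts that the Frobenius orders below $q$ coincide with the initial orders of the morphism $(f_1-f_1^q:\cdots:f_r-f_r^q)$. What you have written is instead an outline for computing the full order sequence of $\mathcal D$ on the Ree curve, i.e.\ a strategy for Theorem~\ref{Dthm} --- and, fatally for the present purpose, it lists Lemma~\ref{smallnu} itself as one of its three main ingredients. As a proof of Lemma~\ref{smallnu} this is circular; as a proof of anything else, it is answering a different question. (A side error: your claim that the coordinates $f_j-f_j^q$ all lie in $\mathbb F_q(x)$ is false for this linear series --- e.g.\ $w_2^q-w_2=y^{3q_0}(x^q-x)$, and $y^{3q_0}\notin\mathbb F_q(x)$ --- though this is moot given the larger mismatch.)

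The actual argument is short and purely formal, with no curve-specific input. By definition the Frobenius orders $\nu_0<\cdots<\nu_{r-1}$ are chosen lexicographically smallest so that the row $(f_0^q:\cdots:f_r^q)$ together with the rows $(D^{\nu_i}f_0:\cdots:D^{\nu_i}f_r)$ are linearly independent over $\mathbb F_q(X)$. With $f_0=1$ the Frobenius row is $(1,f_1^q,\ldots,f_r^q)$. Subtracting it from the $\nu_0=0$ row gives $(0,\,f_1-f_1^q,\ldots,f_r-f_r^q)$, and for $0<\nu<q$ one has $D^\nu(f_j^q)=0$, so $(0,D^\nu f_1,\ldots,D^\nu f_r)=(0,\,D^\nu(f_1-f_1^q),\ldots,D^\nu(f_r-f_r^q))$. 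Since the Frobenius row has a $1$ in the zeroth coordinate while every modified row has a $0$ there, linear independence of the collection, restricted to the indices with $\nu_i<q$, is equivalent to linear independence of the rows $\bigl(D^{\nu_i}(f_1-f_1^q),\ldots,D^{\nu_i}(f_r-f_r^q)\bigr)$, i.e.\ to the $\nu_i$ being orders of the morphism $(f_1-f_1^q:\cdots:f_r-f_r^q)$; lexicographic minimality on both sides then identifies the two sequences. None of the Hasse-derivative bookkeeping, $p$-adic promotion, or witness-counting in your outline is relevant here.
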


\section{Derivatives on the Ree Curve}

The function field of the Ree curve $X$ is $\mathbb F_q(x,y,z)$, where $y$ and $z$ satisfy \eqref{yz}.
The linear series $\mathcal D$ we wish to study corresponds to the $\overline{\mathbb F}_q$-vector space $V_{\mathcal D}$ spanned by the 14 functions
\[
\mathcal B
= \{
1,\;
x,\;
y,\;
z,\;
w_1,\;
w_2,\;
\ldots,\;
w_{10} \} ,
\]
where $w_i$ are defined by
\begin{align*}
w_1 &= x^{3q_0+1} - y^{3q_0}    &w_6 &= v^{3q_0} - w_2^{3q_0} + x w_4^{3q_0}\\
w_2 &= xy^{3q_0} - z^{3q_0} &w_7 &= w_2+v\\
w_3 &= xz^{3q_0} - w_1^{3q_0} &w_8 &= w_5^{3q_0}+xw_7^{3q_0}
\addtocounter{equation}{1}\tag{\theequation} \label{wi} \\*
w_4 &= xw_2^{q_0} - yw_1^{q_0} &w_9 &= w_4w_2^{q_0}-yw_6^{q_0}\\
v &= xw_3^{q_0} - zw_1^{q_0} & w_{10} &= zw_6^{q_0} - w_3^{q_0}w_4\\
w_5 &= yw_3^{q_0} - zw_1^{q_0}
\end{align*}
as in the appendix of \cite{Pedersen91}.
The functions in $\mathcal B$ have distinct orders at the pole $P_\infty$ of $x$, hence are linearly independent.
We will use the separating variable $x$ for computing all Hasse derivatives on the Ree curve.

To compute the orders of $\mathcal D$, we will need to obtain closed form expressions for the derivatives of the functions $f \in \mathcal B$.
In addition to the relations in \eqref{yz}, the following equations derived by Pedersen will be useful for computing the derivatives of the $w_i$.
\begin{align*}
  w_1^q-w_1 &= x^{3q_0}(x^q-x)     & w_4^q-w_4 &= w_2^{q_0}(x^q-x) - w_1^{q_0}(y^q-y) \\*
  w_2^q-w_2 &= y^{3q_0}(x^q-x)     & w_5^q-w_5 &= w_3^{q_0}(y^q-y) - w_2^{q_0}(z^q-z) \\*
  w_3^q-w_3 &= z^{3q_0}(x^q-x)     & w_7^q-w_7 &= w_2^{q_0}(y^q-y) - w_3^{q_0}(x^q-x) \addtocounter{equation}{1}\tag{\theequation} \label{ped} \\*
  w_6^q-w_6 &= w_4^{3q_0}(x^q-x)   & w_9^q-w_9 &= w_2^{q_0}(w_4^q-w_4) - w_6^{q_0}(y^q-y) \\*
  w_8^q-w_8 &= w_7^{3q_0}(x^q-x)   & w_{10}^q-w_{10} &= w_6^{q_0}(z^q-z) - w_3^{q_0}(w_4^q-w_4)
\end{align*}
We have separated these equations into groups of similar form.
We call the $w_i$ which appear on the left hand side of \eqref{ped} of type 1 and the $w_i$ on the right hand side of type 2.

We give an example to show how the expressions in \eqref{ped} are useful for computing derivatives.
To compute the derivatives of $y$, we let $h = x^{q_0}(x^q-x)$.
Since $y^q-y = h$, we may expand $y$ as a series in $h$ whose tail is contained in the kernel of any of the derivations we wish to apply.
To compute $D^i y$ for $i < q^2$, we consider
\[
  y = - h - h^q + y^{q^2} \equiv - h - h^q \mod \overline{\mathbb F}_q(X)^{q^2},
\]
since $\overline{\mathbb F}_q(X)^{q^2} = \bigcap_{i=1}^{q^2-1} \ker D^i$.
Then
\[
  D^iy
  = -D^ih - (D^{i/q}h)^q .
\]
In this manner, the derivatives of $y$ are written in terms of derivatives of $h$, which can be determined using the basic properties of Hasse derivatives.

Each equation in \eqref{ped} is of a similar form, with each new function written in terms of previous ones.
Therefore, one may in principle write down any derivative $D^if$ with $f \in \mathcal B$ as an element of $\mathbb F_q[x,y,z]$ using this method.

For each $f \in \mathcal B$, we construct a set $S_f$ containing all indices $i$ in $\{0, 1, \ldots, q^2\}$ such that $D^if \neq 0$, which we refer to as the \emph{support} of $f$.
We make no claims that $D^if \neq 0$ for all $i$ in $S_f$.
By direct calculation as in the example above, the sets
\begin{align*}
  S_{x^q-x} &= \{ 0, 1, q \} \\
  S_{y^q-y} &= \{ 0, 1, q_0, q_0+1, q, q+q_0\}  \\
  S_y &= \{ 0, 1, q_0, q_0+1, q, q+q_0, qq_0, qq_0+q, q^2 \} \\
  S_{z^q-z} &= \{ 0, 1, q_0, q_0+1, 2q_0, 2q_0+1, q, q+q_0, q+2q_0 \} \\
  S_z &= \{ 0, 1, q_0, q_0+1, 2q_0, 2q_0+1, q, \\
       &\qquad q+q_0, q+2q_0, qq_0, qq_0+q, 2qq_0, 2qq_0+q, q^2 \}
\end{align*}
satisfy the desired conditions.

Now we construct $S_{w_i}$ for $i = 1,\ldots,10$.
For $n \geq 1$ and $A, B \subset \{ 0,\ldots, q^2 \}$ we use the notation
\begin{align*}
 nA &= \{ na : a \in A \} \cap [0,q^2] , \\
 A + B &= \{ a+b : a \in A, b \in B\} \cap [0,q^2] .
\end{align*}
Since $w_1^q-w_1 = x^{3q_0}(x^q-x)$, we define
\begin{align*}
  S_{w_1^q-w_1}
  &= 3q_0 S_x + S_{x^q-x} \\
  &= \{ 0, 3q_0 \} + \{ 0, 1, q\}
  = \{ 0, 1, 3q_0, 3q_0+1, q, q+3q_0\}
\end{align*}
and
\begin{align*}
 S_{w_1}
  &= S_{w_1^q-w_1} \cup q S_{w_1^q-w_1} \\
  &= \{ 0, 1, 3q_0, 3q_0+1, q, q+3q_0, 3qq_0, 3qq_0+q, q^2 \} .
\end{align*}
Similarly, we define $S_{w_2^q-w_2} = 3q_0 S_{y} + S_{x^q-x}$ and $S_{w_2} = S_{w_2^q-w_2} \cup q S_{w_2^q-w_2}$, and so on, using the equations in \eqref{ped} as a guide.

Let $S$ denote the union of the $S_f$ with $f \in \mathcal B$.
Since we will use this information later, we include a table of the $S_f$ in an appendix, and note in particular that
\[
  S_x
  \subset S_{w_1}
  \subset S_{w_2}
  \subset S_{w_3}
  \subset S_{w_6}
  = S_{w_8}
\]
and
\[
  S_y
  \subset S_z
  \subset S_{w_4}
  \subset S_{w_7}
  \subset S_{w_5}
  \subset S_{w_9}
  \subset S_{w_{10}} .
\]
For $s=1$, the indices appearing in $S$ as represented in the appendix are not all distinct, since for example $3q = qq_0$.
To avoid any complications this may cause, we assume going forward that $s \geq 2$.
Computations performed in Magma \cite{MAGMA} have verified the statements of all our results for $s=1$.

\section{Computation of Orders}

In this section we compute the orders of $\mathcal D$.

\Dthm*

\begin{lemma}\label{smalleps}
  The orders of $\mathcal D$ which are less than $q$ are $0$, $1$, $q_0$, $2q_0$, and $3q_0$.
\end{lemma}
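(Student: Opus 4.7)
The plan is to analyze the matrix of Hasse derivatives $(D^n f)_{n, f \in \mathcal B}$ row by row for $n < q$. The support data computed in Section~3 shows that for such $n$ the row is identically zero unless $n \in T := \{0, 1, q_0, q_0+1, 2q_0, 2q_0+1, 3q_0, 3q_0+1\}$, so the orders less than $q$ form a subset of $T$. The orders $0$ and $1$ are immediate, and columns indexed by $1$ and $x$ alone force any subsequent row to be independent from rows $0$ and $1$.

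The key structural lemma to prove first is that iterating Pedersen's relations \eqref{ped}, together with $y^q - y = x^{q_0}(x^q - x)$, $z^q - z = x^{2q_0}(x^q - x)$, and the characteristic-$3$ identity $a^{q_0} - b^{q_0} = (a - b)^{q_0}$, yields for every $f \in \mathcal B \setminus \{1\}$ a uniform expression
\[
f^q - f = H_f^{q_0}(x^q - x)
\]
for an explicit $H_f \in \overline{\mathbb F_q}(X)$. In the cases most relevant below, $H_y = x$, $H_z = x^2$, and $H_{w_1} = x^3$. I expect the bookkeeping needed to extract this uniform factorization from each of the ten Pedersen relations---and verify that the remaining factor is a $q_0$-th power---to be the main technical obstacle.

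Given the factorization, for $0 < n < q$ the identity $f = -H_f^{q_0}(x^q - x) + f^q$ together with $D^n f^q = 0$ reduces $D^n f$ to $-D^n\bigl(H_f^{q_0}(x^q - x)\bigr)$. Applying the Leibniz rule, using that $D^i H^{q_0} = 0$ unless $q_0 \mid i$ and that $D^j(x^q - x) = 0$ for $2 \le j < q$, produces the closed formulas
\[
D^{kq_0} f = (x - x^q)(D^k H_f)^{q_0}, \qquad D^{kq_0+1} f = (D^k H_f)^{q_0}
\]
for $k = 1, 2, 3$.

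Two consequences finish the proof. First, the row at index $kq_0 + 1$ equals $1/(x - x^q)$ times the row at index $kq_0$, so $q_0 + 1$, $2q_0 + 1$, and $3q_0 + 1$ are ruled out as $\mathcal D$-orders. Second, the $3 \times 3$ submatrix at rows $\{q_0, 2q_0, 3q_0\}$ and columns $\{y, z, w_1\}$ equals $(x - x^q)$ times $\bigl((D^k H_f)^{q_0}\bigr)_{k, f}$, which for $H_y = x, H_z = x^2, H_{w_1} = x^3$ is upper-triangular with unit diagonal---hence invertible. Combined with the column-$\{1, x\}$ independence from rows $0$ and $1$, this shows $q_0$, $2q_0$, and $3q_0$ are $\mathcal D$-orders, giving exactly the five stated.
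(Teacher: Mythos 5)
Your proposal is correct, but it takes a more hands-on route than the paper. The paper dispatches this lemma through the St\"ohr--Voloch machinery: by Lemma~\ref{smallnu}, the Frobenius orders below $q$ are the initial orders of the morphism $(x^q-x : y^q-y : \cdots)$, which---by exactly the factorization you isolate---equals $(1 : x^{q_0} : x^{2q_0} : x^{3q_0} : y^{3q_0} : \cdots)$, a $q_0$-th Frobenius twist whose orders below $q$ are $0, q_0, 2q_0, 3q_0$; since exactly one $\mathcal D$-order is omitted from the Frobenius order sequence and $1$ is visibly an order not on that list, the omitted order is $\epsilon_1=1$ and the lemma follows. You instead work directly with the Wronskian-type matrix: the factorization $f^q-f=H_f^{q_0}(x^q-x)$ (which, for the record, is less of an obstacle than you anticipate---it drops out of \eqref{yz} and \eqref{ped} in one line each using $a^{q_0}-b^{q_0}=(a-b)^{q_0}$) gives the closed forms $D^{kq_0}f=-(x^q-x)(D^kH_f)^{q_0}$ and $D^{kq_0+1}f=(D^kH_f)^{q_0}$, whence the proportionality of rows $kq_0$ and $kq_0+1$ and the triangular $3\times 3$ block in columns $y,z,w_1$. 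Both arguments rest on the same factorization; yours is longer but self-contained (it avoids invoking the Frobenius-order subsequence and the ``one omitted order'' fact) and as a bonus reproves the identities \eqref{kq0} that the paper derives separately in the remark following the lemma, while the paper's version is shorter given the cited machinery. Your verifications all check out, including the sign conventions and the use of the support tables to kill the rows with index in $(3q_0+1,q)$.
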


\begin{proof}

  That $\epsilon_0(\mathcal D) = 0$ and $\epsilon_1(\mathcal D) = 1$ is clear.
  The rest follows from Lemma \ref{smallnu}.
  Indeed, the orders of the morphism
  \[
    (x^q-x: y^q-y: z^q-z: w_1^q-w_1 : w_2^q-w_2 : \cdots )
  = ( 1: x^{q_0}:  x^{2q_0}:  x^{3q_0} : y^{3q_0}: \cdots )
  \]
  which are less than $q$ are $0$, $q_0$, $2q_0$, and $3q_0$, so these are the Frobenius orders of $\mathcal D$ which are less than $q$.
  There is only one order of $\mathcal D$ which is not a Frobenius order, and this is $\epsilon_1(\mathcal D)$.
  \qedhere
\end{proof}

\begin{remark*}
In light of Lemma 3, the fact that $\nu_1(\mathcal D) = \epsilon_2(\mathcal D) > 1$ means that the matrix
\[
\begin{pmatrix}
  x^q-x & y^q-y & z^q-z & \cdots & w_{10}^q-w_{10}  \\
  1 & D^1 y & D^1 z & \cdots & D^1 w_{10}
\end{pmatrix}
\]
has rank 1, so that
\begin{equation}\label{nu1}
  f^q - f = (x^q-x) D^1f
\end{equation}
holds for all $f \in \mathcal B$.
Applying the derivatives $D^q$ and $D^{kq_0}$ for $k=1,2,3$ to the previous equation gives the identities
\begin{align}
 D^{kq_0}f &= -(x^q-x) D^{kq_0+1}f, \qquad k = 1,2,3  \label{kq0}  \\
 D^q(f^q-f) &= D^1f + (x^q-x) D^{q+1}f  \label{Dqbbq}
\end{align}
which hold for all $f \in \mathcal B$. These will be used extensively in what follows.
\end{remark*}

From \eqref{fundequiv}, we have for any $P \in X$ a linear equivalence
\[
  \operatorname{Fr}^4(P) + 3q_0\operatorname{Fr}^3(P) + 2q\operatorname{Fr}^2(P) + 3qq_0\operatorname{Fr}(P) + q^2 P \sim m P_\infty .
\]
If $P \not\in X(\mathbb F_q)$, then the terms on the left hand side involve distinct points since $X$ has no places of degrees 2, 3, or 4 over $\mathbb F_q$.
By applying some multiple of the Frobenius to this equivalence, we obtain each of $1$, $3q_0$, $2q$, $3qq_0$, and $q^2$ as orders of $\mathcal D_X$, as in Lemma 3.2 of \cite{FT98}.
By the $p$-adic criterion it follows that $q$ is also an order of $\mathcal D_X$.
That said, it is not immediately clear that these are orders of the linear series $\mathcal D$.
We show now that these are in fact the orders of the subseries $\mathcal E \subset \mathcal D$ corresponding to $V_{\mathcal E} = \overline{\mathbb F}_q \langle 1,x,w_1,w_2,w_3,w_6,w_8 \rangle$, and hence are orders of $\mathcal D$.

\begin{theorem}\label{Ethm}
The orders of $\mathcal E$ are $0$, $1$, $3q_0$, $q$, $2q$, $3qq_0$, and $q^2$.
\end{theorem}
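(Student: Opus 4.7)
Since $\mathcal E$ has projective dimension $6$, there are exactly seven $\mathcal E$-orders, so it suffices to verify that each of the seven proposed values actually appears as an order.

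First I would determine the $\mathcal E$-orders below $q$. The values $0$ and $1$ are immediate. To find the rest, I would invoke Lemma \ref{smallnu}: the Frobenius orders of $\mathcal E$ less than $q$ are the initial orders of the morphism
\[
(x^q-x : w_1^q-w_1 : w_2^q-w_2 : w_3^q-w_3 : w_6^q-w_6 : w_8^q-w_8),
\]
which by \eqref{nu1} is projectively equivalent to $\psi = (1 : x^{3q_0} : y^{3q_0} : z^{3q_0} : w_4^{3q_0} : w_7^{3q_0})$. Since each non-constant coordinate of $\psi$ is a $3q_0$-th power, the orders of $\psi$ are $3q_0$ times the orders of the auxiliary morphism $(1 : x : y : z : w_4 : w_7)$, so orders of $\psi$ less than $q$ correspond to orders of the auxiliary morphism less than $q_0 = q/(3q_0)$. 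The support sets in the appendix show that $S_f \cap (1, q_0) = \emptyset$ for each $f \in \{x, y, z, w_4, w_7\}$, so the only orders of the auxiliary morphism in $[0,q_0)$ are $0$ and $1$, giving Frobenius orders $\{0, 3q_0\}$ for $\mathcal E$ below $q$. Since $\epsilon_1 = 1$ must be the unique non-Frobenius order in this range, the $\mathcal E$-orders below $q$ are $\{0, 1, 3q_0\}$.

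For the four remaining orders, by the paragraph preceding the theorem, the values $q$, $2q$, $3qq_0$, and $q^2$ are already known to be orders of $\mathcal D_X$ via the fundamental equivalence \eqref{fundequiv} and the $p$-adic criterion; the task is to promote each of them to an $\mathcal E$-order. I would form the $7 \times 7$ Wronskian $W$ whose row indexed by $\epsilon_i \in \{0, 1, 3q_0, q, 2q, 3qq_0, q^2\}$ is $D^{\epsilon_i}$ applied to the basis $\{1, x, w_1, w_2, w_3, w_6, w_8\}$, and verify that $\det W \not\equiv 0$ on $X$. The first three rows ($D^0, D^1, D^{3q_0}$) are direct from \eqref{nu1} and a Leibniz/Lucas computation, and produce a clean upper-triangular block in the first three columns. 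The central tool for the remaining rows is \eqref{Dqbbq}, which rewrites
\[
D^q f = (D^1 f)^q - D^1 f - (x^q-x)D^{q+1} f
\]
for $f \in \mathcal B$; iterating this identity at successive Frobenius scales (and using $D^{kq_0}f = -(x^q-x)D^{kq_0+1}f$ from \eqref{kq0}), with the support data $S_f$ used to discard most of the $D^{q+1} f$ and similar higher terms, should produce closed-form polynomial expressions for the rows $D^q, D^{2q}, D^{3qq_0}, D^{q^2}$.

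The main obstacle I anticipate is the careful bookkeeping of cross-terms generated by iterating \eqref{Dqbbq} and applying Leibniz's rule to Pedersen's identities \eqref{ped}, particularly for the $D^{q^2}$ row, which will involve $q$-th powers of intermediate expressions arising in the $D^q$ and $D^{3qq_0}$ rows. A secondary concern is verifying that no intermediate values could also be $\mathcal E$-orders (e.g.\ strictly between $3q_0$ and $q$, or between $q$ and $2q$); however, the same support-based vanishing that eliminated indices in $(1,q_0)$ for the auxiliary morphism scales up to rule these out at the higher Frobenius levels. Once the seven rows have been assembled and $W$ is seen to be block-triangular with visibly non-zero diagonal at a general point of $X$, the theorem follows by uniqueness of the lex-minimal order sequence.
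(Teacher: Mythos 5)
Your treatment of the orders below $q$ is sound and is essentially a version of the paper's Lemma \ref{smalleps} adapted to $\mathcal E$. The gap is in the second half. Showing that the $7\times 7$ matrix with rows $D^{\epsilon_i}$, $\epsilon_i\in\{0,1,3q_0,q,2q,3qq_0,q^2\}$, has nonzero determinant only proves that the true order sequence is termwise (hence lexicographically) \emph{at most} this candidate sequence; it does not prove equality. To pin the sequence down you must also show that the competing smaller indices are \emph{not} orders, and your proposed mechanism for this --- that ``the same support-based vanishing scales up to rule these out'' --- fails. The supports contain many indices strictly between consecutive proposed orders: $3q_0+1$, $q+1$, $q+3q_0$, $2q+1$, $3q$, $3qq_0+1$, etc.\ all lie in $S_{w_8}$, and the corresponding derivative rows are genuinely nonzero (e.g.\ $D^{q+1}w_2=(D^{q_0}y)^{3q_0}=-\ell^{3q_0}\neq 0$). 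Eliminating them requires exhibiting explicit $\mathbb F_q(X)$-linear dependences among derivative rows, and this is where the real work of the paper lies: one derives the closed forms \eqref{Dwi} for $D^{3q_0+1}w$, $D^qw$, $D^{q+1}w$, $D^{q+3q_0}w$, $D^{2q}w$, $D^{3q}w$ and verifies the relations $\ell D^{q+1}w + D^qw = \ell^{3q_0}D^{3q_0+1}w$, $\ell^{3q_0}D^{q+3q_0}w + D^qw = 0$, and $\ell D^{3q}w = D^{2q}w + D^{q+1}w$; the $p$-adic criterion then disposes of every remaining candidate in $S_{w_8}$, since each dominates one of the excluded indices or $3qq_0$ under $\leq_3$.

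Two further points. First, the paper never computes a $D^{q^2}$ row: it proves that $q^2$ is the largest order by a separate argument (the hypersurface of Lemma \ref{hypersurface} and the osculating-hyperplane functions $g_P$, together with Proposition 3.4 of \cite{FGT97} for the upper bound). Evaluating $D^{q^2}f$ for every basis function by iterating \eqref{Dqbbq} would be far more onerous than you anticipate. Second, the paper's lower bound on the number of small orders comes from a $5\times 5$ matrix indexed by $\{0,1,3q_0+1,q+3q_0+1,2q+3q_0+1\}$ --- deliberately \emph{not} the proposed orders --- chosen so that the support tables make it upper triangular with unit diagonal; your matrix, indexed by the proposed orders themselves, is not triangular and its determinant is much harder to control. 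The workable architecture is therefore: a triangular-matrix lower bound on the count of orders in an initial segment, exclusion of the $\leq_3$-minimal non-orders by explicit differential equations, and a separate argument for $q^2$. A Wronskian evaluated only at the conjectured answer cannot close the argument on its own.
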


For ease of notation we write $\ell = x^q-x$ in the proof of the next lemma and throughout the rest of the paper.

\begin{lemma}\label{hypersurface}
  The image in $\mathbb P^6$ of the map $\phi_{\mathcal E} = (1:x:w_1:w_2:w_3:w_6:w_8)$ lies on the hypersurface
  \[
    \sum_{i+j = 6} X_i^{q^2} X_j = 0 .
  \]
\end{lemma}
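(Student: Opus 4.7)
The plan is to verify the identity by direct computation in the function field of $X$. Writing $\ell = x^q - x$ and substituting the coordinates of $\phi_{\mathcal{E}}$ into the left-hand side, the claim reduces to showing
\[
F := w_8 + w_8^{q^2} + xw_6^{q^2} + x^{q^2}w_6 + w_1 w_3^{q^2} + w_3 w_1^{q^2} + w_2^{q^2+1} = 0.
\]
Each of the coordinate functions $x, w_1, w_2, w_3, w_6, w_8$ is of \emph{type 1} in the sense of \eqref{ped}: it satisfies $f^q - f = g_f^{3q_0}\ell$ for a partner $g_f \in \{1, x, y, z, w_4, w_7\}$ respectively. Applying Frobenius one more time gives
\[
f^{q^2} = f + g_f^{3q_0}\ell + g_f^{3qq_0}\ell^q.
\]

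Substituting these expressions into $F$ and collecting like terms, and using $2 \equiv -1 \pmod{3}$, the expression takes the form
\[
F = \bigl(-w_8 - xw_6 - w_1 w_3 + w_2^2\bigr) + A\,\ell + B\,\ell^q,
\]
where $A$ and $B$ are explicit polynomial expressions in the $w_i$ and the partners $g_f$, with $B$ being essentially the $q$-th power analogue of $A$. I would then attempt to verify that each of the three pieces vanishes separately. The constant part collapses to the quadratic relation
\[
w_2^2 = w_8 + xw_6 + w_1 w_3,
\]
which one expects to be a consequence of the explicit definitions \eqref{wi}, the affine equations \eqref{yz}, and the Frobenius-friendly identity $(a \pm b)^{3q_0} = a^{3q_0} \pm b^{3q_0}$ (valid since $3q_0$ is a power of the characteristic). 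The $\ell$- and $\ell^q$-coefficients produce two further identities that reduce to a single underlying polynomial relation by the same Frobenius argument.

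The main obstacle is the bookkeeping: expanding each $w_i$ down to $x, y, z$ generates many high-degree monomials, and careful tracking of signs and exponents is needed. A likely cleaner alternative, if the direct expansion becomes unwieldy, is a pole-order argument at $P_{\infty}$: compute $v_{P_\infty}$ of each of the seven terms in $F$, show that all leading-order contributions cancel so that $F$ lies in a small space $\mathcal{L}(kP_\infty)$ of regular functions, and then evaluate $F$ at one convenient point (for instance an $\mathbb{F}_q$-rational affine point of $X$) to conclude $F = 0$.
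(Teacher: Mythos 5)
Your decomposition is exactly the paper's: using the type-1 relations $f^q-f=g_f^{3q_0}\ell$ from \eqref{ped} to write $f^{q^2}=f+g_f^{3q_0}\ell+g_f^{3qq_0}\ell^q$, the sum collapses to $A_{-1}+A_0\ell+A_1\ell^q$ with $A_{-1}=w_2^2-(w_8+xw_6+w_1w_3)$, and the paper then asserts $A_{-1}=A_0=A_1=0$, verified via \eqref{wi}, \eqref{ped} and some of the 105 equations of \cite{DE15}, with details deferred to \cite{rcf} --- which is also where your write-up stops. Two caveats. First, $A_1$ is \emph{not} a Frobenius twist of $A_0$: one computes $A_0^q-A_1=\ell\,\bigl(w_4+w_4^q+xz^q+x^qz+y^{q+1}\bigr)^{3q_0}$, since terms such as $w_6$ and the factor $x$ in $xw_4^{3qq_0}$ appear untwisted in $A_1$; so the $\ell^q$-coefficient is a genuinely separate identity on $X$, not a consequence of $A_0=0$ ``by the same Frobenius argument.'' Second, your fallback pole-order argument is logically incomplete as stated: placing $F$ in $\mathcal L(kP_\infty)$ with $k>0$ and checking vanishing at one point does not force $F=0$; you would need $k=0$ (full cancellation of the principal part at $P_\infty$) or at least $k+1$ zeros counted with multiplicity, and establishing that cancellation is no easier than the direct verification.
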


\begin{proof}
By using \eqref{ped} one finds that
\begin{align*}
  w_8^{q^2} + w_8
  &= (w_7^{3q_0})^q \ell^q + w_7^{3q_0} \ell - w_8 \\
  x w_6^{q^2} + x^{q^2} w_6
  &= ( (w_4^{3q_0})^{q}x + w_6 )\ell^q
   + ( w_4^{3q_0}x + w_6 )\ell
   - xw_6 \\
  w_1w_3^{q^2} + w_1^{q^2}w_3
  &= ( (z^{3q_0})^{q}w_1   + (x^{3q_0})^{q}w_3 )\ell^q
   + (  z^{3q_0}w_1        + x^{3q_0}w_3 )\ell
   - w_1w_3 \\
  w_2^{q^2+1}
  &= (y^{3q_0})^{q}w_2 \ell^q + y^{3q_0}w_2 \ell  + w_2^2 .
\end{align*}
Summing these and collecting terms involving common powers of $\ell$ gives an expression of the form
\[
  A_{-1} + A_0\ell + A_1\ell^q .
\]
That each $A_i = 0$ on $X$ may be verified using \eqref{wi} and \eqref{ped}, along with some of the 105 equations found in \cite{DE15}.
This calculation is carried out more explicit detail in the proof of Lemma 4.3 of \cite{rcf}.
\qedhere

\end{proof}

\begin{lemma}
  The largest order of $\mathcal E$ is $q^2$.
\end{lemma}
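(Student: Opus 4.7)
The plan is to exploit the hypersurface identity of Lemma \ref{hypersurface} as a source of differential relations. Writing $(F_0,\ldots,F_6) = (1,x,w_1,w_2,w_3,w_6,w_8)$ for the coordinate functions of $\phi_\mathcal E$, this identity becomes
\[
  \sum_{i+j=6} F_i^{q^2} F_j = 0 \quad\text{on } X.
\]
Since $q^2$ is a power of $p$, one has $D^\epsilon(G^{q^2}) = 0$ for every $0 < \epsilon < q^2$, and so applying $D^\epsilon$ term by term and using the product rule collapses the identity to
\[
  \sum_{j=0}^{6} F_{6-j}^{q^2}\,D^\epsilon F_j = 0, \qquad 0 \leq \epsilon < q^2.
\]

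Read as a linear relation, for each $0 \leq \epsilon < q^2$ the row vector $(D^\epsilon F_0, \ldots, D^\epsilon F_6)$ is annihilated by the fixed column $(F_6^{q^2}, F_5^{q^2}, \ldots, F_0^{q^2})^\top$, which is nonzero because $F_0 = 1$. If the seven orders $\epsilon_0 < \cdots < \epsilon_6$ of $\mathcal E$ were all strictly less than $q^2$, then all seven rows of the Wronskian-type matrix $(D^{\epsilon_k} F_j)_{k,j}$ would lie in a fixed six-dimensional subspace, forcing its determinant to vanish generically and contradicting the definition of the order sequence. Hence $\epsilon_6(\mathcal E) \geq q^2$.

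For the matching upper bound I would pair this lower bound with the identification of six strictly smaller $\mathcal E$-orders, namely $0, 1, 3q_0, q, 2q, 3qq_0$. Three of these come from an argument analogous to Lemma \ref{smalleps}: applying Lemma \ref{smallnu} to the basis of $V_\mathcal E$, the relevant morphism simplifies (after division by $\ell = x^q - x$) to $(1 : x^{3q_0} : y^{3q_0} : z^{3q_0} : w_4^{3q_0} : w_7^{3q_0})$, and its Frobenius orders below $q$ are forced to be $\{0, 3q_0\}$, producing $\mathcal E$-orders $\{0, 1, 3q_0\}$ below $q$. The remaining orders $q, 2q, 3qq_0$ are obtained from the fundamental equivalence \eqref{fundequiv} by shifting along powers of Frobenius, as in Lemma 3.2 of \cite{FT98}, after verifying that the relevant hyperplane divisors are cut out by sections of $V_\mathcal E$. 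Since $\mathcal E$ has exactly seven orders, these six values together with $\epsilon_6 \geq q^2$ force $\epsilon_6 = q^2$.

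I expect the main obstacle to lie in this upper-bound verification rather than in the essentially mechanical lower-bound Wronskian argument, since producing each candidate as an $\mathcal E$-order (and not merely a $\mathcal D_X$-order) requires genuine use of the specific structure of $V_\mathcal E$; the lower bound is clean once the vanishing $D^\epsilon(G^{q^2}) = 0$ for $0 < \epsilon < q^2$ is noted and the role of $F_0 = 1$ as the nonvanishing entry of the annihilating vector is used.
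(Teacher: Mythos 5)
Your lower-bound argument is correct and is essentially the same idea as the paper's, in a slightly different packaging: the paper constructs, for general $P$, the explicit section $g_P=\sum_{i+j=6}f_i^{q^2}(P)f_j\in V_{\mathcal E}$ and shows it vanishes to order at least $q^2$ at $P$ by comparing it with the $q^2$-th power $h_P=G(\cdot,P)$, whereas you differentiate the identity of Lemma \ref{hypersurface} directly and observe that every row $(D^{\epsilon}F_0,\ldots,D^{\epsilon}F_6)$ with $0\le\epsilon<q^2$ is annihilated by the nonzero vector $(F_6^{q^2},\ldots,F_0^{q^2})$, so the Wronskian-type matrix cannot have full rank using only such rows. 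Both arguments rest on the same two facts (the hypersurface relation and $D^{\epsilon}(G^{q^2})=0$ for $0<\epsilon<q^2$), and yours cleanly yields $\epsilon_6(\mathcal E)\ge q^2$.

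The gap is in your upper bound. Knowing that $0,1,3q_0,q,2q,3qq_0$ are the first six orders and that $\epsilon_6\ge q^2$ does \emph{not} force $\epsilon_6=q^2$: the seventh order is only constrained a priori by the degree of the series, which is $m=q^2+3qq_0+2q+3q_0+1>q^2$, so $\epsilon_6$ could in principle be any value in $[q^2,m]$. Your differentiation argument is one-sided --- it shows no $\epsilon<q^2$ can serve as the seventh order, but says nothing about whether $\epsilon=q^2$ itself works (for that you would have to check that $\sum_{i+j=6}(D^1F_i)^{q^2}F_j\ne 0$, i.e.\ that the row at $q^2$ escapes the annihilated hyperplane, which you do not do). The paper supplies the missing upper bound from an entirely different source: Proposition 3.4 of \cite{FGT97}, which uses the monotonicity $a_0\ge a_2\ge\cdots\ge a_4$ of the coefficients of $m(t)$ together with the inequality $\#X(\mathbb F_q)>q(m-a_0)+1$ to conclude that the largest order of the full Frobenius series $\mathcal D_X$ is $q^2$; since every order of $\mathcal E$ is an order of $\mathcal D_X$, this caps $\epsilon_6(\mathcal E)$ at $q^2$. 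Some ingredient of this kind (or the direct nonvanishing check at $\epsilon=q^2$) is needed to close your argument. There is also a structural concern with your route to the six smaller orders: in the paper, establishing that $q$, $2q$, and $3qq_0$ are $\mathcal E$-orders is the content of Theorem \ref{Ethm}, whose proof \emph{uses} the present lemma to pin down the top order; so if you intend to import those orders you must establish them independently, which is a substantial computation you have only gestured at.
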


\begin{proof}

Because the coefficients $a_i$ of $m(t) = t^4 + 3q_0t^3 + 2qt^2 + 3qq_0t  + q^2$ satisfy $a_0 \geq a_2 \geq \cdots \geq a_4$ and $\#X(\mathbb F_q) > q(m-a_0)+1$, it follows from Proposition 3.4 of \cite{FGT97} that the largest order of $\mathcal D_X$ is $q^2$ (our numbering of the coefficients $a_i$ is opposite that found in the reference).
Since each order of $\mathcal E$ is an order of $\mathcal D_X$, no order of $\mathcal E$ is greater than $q^2$.
We exhibit a family of functions $g_P$ in $V_{\mathcal E}$ parameterized by $P$ in $X$ which vanish to order at least $q^2$ at $P$.

Write $(1,x,w_1,w_2,w_3,w_6,w_8) = (f_0,\ldots,f_6)$.
Then by Lemma \ref{hypersurface}, the function
\[
  G(P,Q) = \sum_{i+j = 6} f_i^{q^2}(P) f_j(Q)
\]
vanishes on the diagonal of $X \times X$.
Choose any $P \in X \smallsetminus \{P_\infty\}$.
Then $g_P = G(P,\cdot )$ and $h_P = G(\cdot ,P)$ are functions on $X$ which vanish at $P$, and $g_P$ is in $V_{\mathcal E}$.
Since
\[
  h_P = \sum_{i+j = 6} f_j(P) f_i^{q^2}
  = \left( \sum_{i+j = 6} f_j(P)^{1/q^2} f_i \right)^{q^2},
\]
the function $h_P$ vanishes at $P$ to order at least $q^2$.
But
\begin{align*}
  g_P - h_P
  &= \sum_{i+j=6} f_i^{q^2}(P)f_j - f_j(P) f_i^{q^2} \\
  &= \sum_{i+j=6} (f_i^{q^2}(P)-f_i^{q^2})(f_j(P) + f_j ) + f_i^{q^2}f_j - f_i^{q^2}(P)f_j(P) \\
  &= \sum_{i+j=6} (f_i(P)-f_i)^{q^2} (f_j(P) + f_j)
\end{align*}
also vanishes at $P$ to order at least $q^2$, hence so does $g_P$.
Since $P$ was chosen in an open subset of $X$, $q^2$ is an order of $\mathcal E$.
\qedhere

\end{proof}

\begin{remark*}
  The proof of the preceding lemma shows that
  \[
    \sum_{i+j = 6} f_i^{q^2}(P) X_j = 0
  \]
  is the equation of the osculating hyperplane at $P$, and that the function $X \to \operatorname{Div}(X)$ taking $P \mapsto \operatorname{div}(g_P)$ assigns to $P$ the corresponding hyperplane section.
\end{remark*}

\begin{proof}[Proof of Theorem \ref{Ethm}]

  Let $M$ be the matrix of derivatives $[D^i f_j]$ with $i \in \{ 0,1,3q_0+1,q+3q_0+1,2q+3q_0+1\}$ and $f_j \in \{ 1,x,w_1,w_2,w_3\}$.
  By the appendix, the matrix $M$ is upper triangular.
  Moreover, one may check by hand that each diagonal entry is equal to 1.
  Thus, there are at least 5 orders $\epsilon$ of $\mathcal E$ with $\epsilon \leq 2q+3q_0+1$.

  Let $I_{\mathcal E} = \{ 0,1,3q_0,q,2q,3qq_0,q^2 \}$ be the proposed set of orders.
  The subset of $S_{w_8} \smallsetminus I_{\mathcal E}$ of elements minimal with respect to the partial order $\leq_3$ is
  \[
    J = \{ 3q_0+1, q+1, q+3q_0, 3q, 3qq_0+1, 3qq_0+3q_0, 3qq_0+q, 6qq_0 \} .
  \]
  By the $p$-adic criterion, to prove the theorem it will suffice to show that no $j \in J$ is an order of $\mathcal E$.
  In fact, it will be enough to show that no $j \in \{ 3q_0+1, q+1, q+3q_0, 3q \}$ is an order.
  For then we will already know that the six elements of $I_{\mathcal E} \smallsetminus \{3qq_0\}$ are orders.
  Then exactly one of the remaining elements of $J$ is the seventh and final order of $\mathcal E$.
  But each of the remaining elements satisfies $j \geq_3 3qq_0$, so this final order is $3qq_0$.

  That $3q_0+1$ is not an order follows from \eqref{kq0}.
Let $w$ be one of the functions $w_1,w_2,w_3,w_6,w_8$.
Each of these is of the form
\[
w \equiv - h - h^q \mod \overline{\mathbb F}_q(X)^{q^2} ,
\]
where $h = f^{3q_0}(x^q-x)$ and $f \in \{ x,y,z,w_4,w_7 \}$.
Then $D^k w = - D^kh - (D^{k/q}h)^q$ and
\begin{align*}
  D^k h
  &= \sum_{3q_0 i + j = k} (D^i f)^{3q_0} D^j(x^q-x) \\
  &= (x^q-x) (D^{\frac{k}{3q_0}}f)^{3q_0}
  - (D^{\frac{k-1}{3q_0}}f)^{3q_0}
  + (D^{\frac{k-q}{3q_0}}f)^{3q_0} .
\end{align*}
We calculate
\begin{align*}
 D^{3q_0+1} w &= (D^1f)^{3q_0} 				
 &D^{q+3q_0} w &= - (D^1f)^{3q_0} - \ell(D^{q_0+1}f)^{3q_0} \\
 D^{q} w &= (f^q-f)^{3q_0} -\ell(D^{q_0}f)^{3q_0}
 &D^{2q} w &= - (D^{q_0}f)^{3q_0} - \ell(D^{2q_0}f)^{3q_0}
 \addtocounter{equation}{1}\tag{\theequation} \label{Dwi} \\
 D^{q+1} w &= (D^{q_0}f)^{3q_0} 		
 &D^{3q} w &= - (D^{2q_0}f)^{3q_0} .
\end{align*}
Then by using these along with \eqref{nu1} and \eqref{kq0}, one immediately verifies that
\begin{align*}
  \ell D^{q+1}w + D^qw &= \ell^{3q_0} D^{3q_0+1}w  \\
  \ell^{3q_0} D^{q+3q_0}w + D^qw &= 0 \\
  \ell D^{3q}w &= D^{2q}w + D^{q+1}w ,
\end{align*}
and so $q+1$, $q+3q_0$, and $3q$ are not orders of $\mathcal D$.
This completes the proof.
\qedhere

\end{proof}

Up to this point we have shown that the nine numbers
\[
  0,\; 1,\; q_0,\; 2q_0,\; 3q_0,\; q,\; 2q,\; 3qq_0,\; q^2
\]
are orders of $\mathcal D$, and it remains to show that $q+q_0$, $qq_0$, $qq_0+q_0$, $qq_0+q$, and $2qq_0$ are orders.

\begin{proof}[Proof of Theorem \ref{Dthm}]

  Let $I_{\mathcal D}$ be the list of orders of $\mathcal D$ proposed in the statement of the theorem.
  Let $M$ be the 12 by 12 matrix of derivatives $[D^i f_j]$ with $i$ in
  \begin{multline*}
    \{0,1,q_0+1, 2q_0+1, 3q_0+1, q+3q_0+1, 2q+3q_0+1, qq_0+2q+q_0, \\ qq_0+q+2q_0+1, qq_0+2q+3q_0, qq_0+3q+3q_0, 2qq_0+3q_0+1 \}
  \end{multline*}
  and $f_j$ in $\{ 1,x,y,z,w_1,w_2,w_3,w_4,w_7,w_5,w_9,w_{10} \}$.
  The appendix assures that $M$ is upper triangular.
  Moreover, one may check by hand that each diagonal entry is equal to 1, except the last, which is $x^{2q}$.
  Thus there are at least 11 orders which are less than $2qq_0$, and 12 orders which are at most $2qq_0 + 3q_0+1$.
  Since there are 14 orders in total, and we already know that $3qq_0$ and $q^2$ are among them, to prove the theorem it will be enough to show that no element of $(S \smallsetminus I_{\mathcal D}) \cap [0,2qq_0+3q_0+1]$ is an order.

By the $p$-adic criterion, it suffices to check elements of this set which are minimal with respect to $\leq_3$.
These elements comprise the set
\begin{align*}
  J = \{
  &q_0+1,
  3q_0+1,
  q+1,\;
  q+2q_0,\;
  q+3q_0,\;
  2q+q_0,\;
  3q, \;qq_0+1,  \\
  &qq_0+2q_0,\;
  qq_0+3q_0,\;
  qq_0+q+q_0,\;
  qq_0+2q, \;
  2qq_0+q_0 \} .
  \addtocounter{equation}{1}\tag{\theequation} \label{nonorders}
\end{align*}
In fact, it will be enough to demonstrate that each element of $J \smallsetminus \{ 2qq_0+q_0 \}$ is not an order, since $2qq_0 \leq_3 2qq_0 + q_0$.

That $q_0+1$ and $3q_0+1$ are not orders follows from Lemma \ref{smalleps}.
To deal with each remaining ten elements $j \in J$, we give a differential equation
\[
c_jD^jf + \sum_{i < j} c_i D^if = 0, \qquad c_i \in \mathbb F_q(X)
\]
which is satisfied by all $f \in \mathcal B$.
These are listed in the following lemma, and proven in the next section.
This will complete the proof of the theorem.
\qedhere

\end{proof}

\begin{lemma}\label{diffeqns}
The following differential equations are satisfied by each $f \in \mathcal B$:
\begin{enumerate}[label={\normalfont (A\arabic*)}]
\item $\ell^{q_0}D^{q_0+1}f + \ell^{2q_0}D^{2q_0+1}f + \ell^{3q_0}D^{3q_0+1}f = D^qf + \ell D^{q+1}f$
\label{de q+1}
\item $\ell^{q_0}(D^{q+2q_0}f + D^{2q_0+1})f = D^{q+q_0}f + D^{q_0+1}f$
\label{de q+2q0}
\item $D^qf + \ell^{q_0} D^{q+q_0}f + \ell^{2q_0} D^{q+2q_0}f + \ell^{3q_0} D^{q+3q_0}f = 0$
\label{de q+3q0}
\item $\ell D^{2q+q_0}f = D^{q_0+1}f + D^{q+q_0}f$
\label{de 2q+q0}
\item $\ell D^{3q}f = D^{2q}f + D^{q+1}f$
\label{de 3q}
\item $\ell D^{qq_0+1}f + D^{qq_0}f = \ell^q ( \ell^{q_0}D^{2q_0+1}f - D^{q_0+1}f)$
\label{de qq0+1}
\item $\ell^{2q_0}D^{qq_0+2q_0}f - \ell^{q_0}D^{qq_0+q_0}f = \ell^q( D^{q_0+1}f + D^{q+q_0}f ) $
\label{de qq0+2q0}
\item $\ell^{q_0} D^{qq_0+q_0}f + \ell^{2q_0} D^{qq_0+2q_0}f + \ell^{3q_0} D^{qq_0+3q_0}f = \ell D^{qq_0+1}f$
\label{de qq0+3q0}
\item $\ell^{q+q_0+1}D^{qq_0+q+q_0}f = (\ell^q-\ell)\ell^{q_0}D^{qq_0+q_0}f$
\label{de qq0+q+q0}
\item $\ell^{2q}( \ell D^{qq_0+2q}f - D^{qq_0+1}f ) = (\ell^q-\ell)( \ell^qD^{qq_0+q}f + D^{qq_0}f )$ .
\label{de qq0+2q}
\end{enumerate}
\end{lemma}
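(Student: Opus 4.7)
My plan is to verify each identity (A1)--(A10) directly for every $f \in \mathcal B$. The cases $f=1$ and $f=x$ will be immediate since the relevant derivatives vanish; the substantive work will concern $y$, $z$, and the ten $w_i$.

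The main computational device I would use is the following. For each such $f$, the defining equation $f^q - f = g_f$ supplied by \eqref{yz} or \eqref{ped} allows one to write
\[
  f \equiv -g_f - g_f^q \pmod{\overline{\mathbb F}_q(X)^{q^2}} ,
\]
so that $D^i f = -D^i g_f - (D^{i/q} g_f)^q$ for $0 \le i < q^2$. Since each $g_f$ is a sum of products of the form $h^{q_0}(a^q - a)$ or $h^{3q_0}\ell$, and since $D^i(h^{q_0}) = (D^{i/q_0}h)^{q_0}$ vanishes unless $q_0 \mid i$, the Leibniz rule yields closed-form expressions for $D^i f$ at the handful of indices actually appearing in (A1)--(A10). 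For the type 1 functions this calculation is precisely the one already carried out in \eqref{Dwi}; I would derive analogous formulas for $y$, $z$, and the type 2 functions $w_4, w_5, w_7, w_9, w_{10}$ in the same way.

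For the identities (A1)--(A5), whose indices do not exceed $3q$, I would obtain each equation by applying $D^k$ with $k \in \{q_0,\, 2q_0,\, 3q_0,\, q_0+1,\, 2q_0+1,\, q,\, q+q_0,\, 2q\}$ to the fundamental relation \eqref{nu1}, then simplifying by means of \eqref{kq0} together with the vanishing of $D^k$ on $q$-th powers when $q \nmid k$. This should amount to a modest bookkeeping exercise modeled on the derivations of \eqref{kq0} and \eqref{Dqbbq} already exhibited in the remark following Lemma \ref{smalleps}.

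For the identities (A6)--(A10), whose indices lie in the interval $[qq_0,\, qq_0 + q + q_0]$, a further Frobenius twist is required: applying $D^{qq_0+k}$ to \eqref{nu1} and to Pedersen's equations introduces contributions of the form $(D^{q_0 + k/q}f)^q$, which combine with the lower-index pieces to produce the claimed identities. I expect the main obstacle to be the case analysis for the type 2 functions $w_9$ and $w_{10}$, whose Pedersen equations contain products such as $w_2^{q_0}(w_4^q - w_4)$ and $w_6^{q_0}(z^q - z)$. In these cases $D^{qq_0+k}$ splits into several cross terms, and verifying (A9) and (A10) will require reducing those cross terms via repeated application of \eqref{nu1} and \eqref{kq0} to identities already established for the lower functions.
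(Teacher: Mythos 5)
Your general computational framework---expanding each $f$ as $-g_f-g_f^q$ modulo $q^2$-th powers and extracting closed forms for $D^if$ via the Leibniz and Frobenius rules---is the same device the paper uses. But your specific plan for (A1)--(A5) does not work: these identities are not consequences of \eqref{nu1}. Applying $D^k$ to \eqref{nu1} only produces relations of the type \eqref{kq0} and \eqref{Dqbbq}, in which each new derivative is tied to the ones forced by the Leibniz expansion of $\ell D^1f$; it cannot produce a relation such as (A1) linking $D^{q_0+1}f$, $D^{2q_0+1}f$, $D^{3q_0+1}f$ to $D^qf+\ell D^{q+1}f$ with the coefficients $\ell^{kq_0}$. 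Those coefficients come from the specific shape $h=f^{3q_0}\ell$ (type 1) or $h=f^{q_0}(b^q-b)$ (type 2) of the defining equations: for instance, for $f=y$ identity (A1) reduces to $D^qy=\ell^{q_0}$, which is the defining equation $y^q-y=x^{q_0}\ell$ in disguise, not a formal consequence of $\nu_1>1$. The paper accordingly proves (A1), (A3), (A5) for type-1 functions from the explicit formulas \eqref{Dwi}, and the remaining cases from analogous explicit expansions.

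The larger omission concerns the type-2 functions. The paper's verification is not a term-by-term check of closed forms for each $w$: it (i) writes each type-2 $w$ as $t_1-t_2$ with $t_i^q-t_i=f_i^{q_0}(b_i^q-b_i)$, (ii) runs an induction along the list $x,y,z,w_4,w_5,w_7,w_9,w_{10}$ so that (A1)--(A10) may be assumed for the inner functions $b_i$ (this is what makes the $f_i^{q_0}$-terms drop out), and (iii) for (A2), (A6), (A7), and (A8) the identities \emph{fail} for the individual summands $t_i$; one must form the discrepancy $\Delta_{t_i}$ and show $\Delta_{t_1}=\Delta_{t_2}$ case by case using the table \eqref{btable}, with a separate computation for $w_7$. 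You anticipate cross terms and an appeal to lower functions only for $w_9$ and $w_{10}$, but every type-2 Pedersen equation has this two-term structure, and the need for cancellation between the two halves---rather than each half satisfying the identity---is the genuinely delicate point that your plan does not address.
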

\begin{proof}
  The fact that $x$, $y$, and $z$ satisfy these equations may be verified without difficulty by hand.
  If $f$ is of type 1, then by consulting the appendix we see that the only equations among \ref{de q+1}--\ref{de qq0+2q} in which nonzero derivatives of $f$ appear are \ref{de q+1}, \ref{de q+3q0}, and \ref{de 3q}, and, keeping in mind that some of the terms are zero for $f$ of type 1, these are the equations which were verified in the proof of Theorem \ref{Ethm}.
  The proof for functions of type 2 is contained in the next section.
  \qedhere
\end{proof}

\section{Verification of Some Differential Equations}

Before proceeding to verify equations \ref{de q+1}--\ref{de qq0+2q} for functions of type 2, we first list a few identities for $w$ of type 1 which will be useful for this task.
For $w$ of type 1, write $w^q-w = f^{3q_0}(b^q-b)$ as in the proof of Theorem \ref{Ethm}.
Then $D^{2q+1}w = (D^{2q_0}f)^{3q_0}$, and so by \eqref{Dwi}, we have
\begin{equation}\label{2q+1}
   \ell D^{2q+1}w + D^{2q}w + D^{q+1}w = 0 .
\end{equation}
Also, from the proof of Theorem \ref{Ethm} we recall that
\begin{align}
  D^qw + \ell D^{q+1}w &= \ell^{3q_0}D^{3q_0+1}w \label{q+1}, \\
  \ell^{3q_0}D^{q+3q_0}w + D^qw &= 0 \label{q+3q0} .
\end{align}

Now let $w$ be of type 2.
From \eqref{ped}, $w$ may be written in the form $w = t_1 - t_2$, where $t_i$ satisfies
\[
  t_i^q - t_i = h_i =  f_i^{q_0}(b_i^q-b_i)  ,
\]
for some $f_i \in \{ w_1,w_2,w_3,w_6\}$ and $b_i \in \{ x,y,z,w_4\}$.
Thus, if one of the desired equations holds for all $t_i$ of this form, then it also holds for $w$.
This will be the case for some but not all of the equations we wish to verify.

Let $t,h,f,b$ be as above.
Then
\[
  D^k h
  = \sum_{q_0 i + j = k} (D^i f)^{q_0} D^j(b^q-b)
\]
and
\[
  D^it = -D^ih - (D^{i/q}h)^q .
\]
Taking into account the supports of $f$ and $b$ and using \eqref{nu1} and \eqref{kq0} to make simplifications, we compute the derivatives of $t$ which appear in equations \ref{de q+1}--\ref{de qq0+2q}:
\begin{align*}
  D^{kq_0+1}t
&= f^{q_0} D^{kq_0+1}b
+ (D^1f)^{q_0} D^{(k-1)q_0 + 1}b , \qquad k=1,2,3 \\
  D^{q} t
&= f^{q_0}D^qb
+ (D^1f)^{q_0} \ell^{q_0} D^q(b^q)
+ (D^{3q_0+1}f)^{q_0} \ell^{q_0+1} D^1b  \\
 D^{q+1}t
&=  f^{q_0}D^{q+1}b
-(D^{3q_0+1}f)^{q_0} \ell^{q_0} D^1b \\
 D^{q+q_0}t
&= f^{q_0}D^{q+q_0}b
- (D^1f)^{q_0}D^q(b^q-b) \\*
&\qquad + (D^{3q_0+1}f)^{q_0} ( \ell^{q_0+1}D^{q_0+1}b - \ell D^1b ) \\
 D^{q+2q_0}t
&= f^{q_0}D^{q+2q_0} b
+ (D^1f)^{q_0}D^{q+q_0}b \\*
&\qquad + (D^{3q_0+1}f)^{q_0} ( \ell^{q_0+1}D^{2q_0+1}b - \ell D^{q_0+1}b ) \\
 D^{q+3q_0}t
&= f^{q_0}D^{q+3q_0} b
+ (D^1f)^{q_0}D^{q+2q_0}b
- (D^{3q_0+1}f)^{q_0} \ell D^{2q_0+1}b  \\
 D^{2q}t
&= f^{q_0}D^{2q}b
+ (D^{3q_0+1}f)^{q_0}\ell^{q_0}D^q(b^q-b) \\
 D^{2q+q_0}t
&= f^{q_0}D^{2q+q_0}b + (D^1f)^{q_0}D^{2q}b \\*
&\qquad - (D^{3q_0+1}f)^{q_0} (\ell^{q_0}D^{q+q_0}b + D^q(b^q-b) ) \\
 D^{3q}t
&= f^{q_0}D^{3q}b
- (D^{3q_0+1}f)^{q_0}\ell^{q_0}D^{2q}b \\
D^{qq_0} t &=
f^{q_0}D^{qq_0}b
+ (D^1f)^{q_0}\ell^{q_0} D^{qq_0}(b^q) \\*
&\qquad - (D^qf)^{q_0} \ell D^1b
- (D^qf^q)^{q_0}\ell^qD^q(b^q) \\
 D^{qq_0+1}t
&= f^{q_0}D^{qq_0+1}b
+ (D^qf)^{q_0} D^1b \\
 D^{qq_0+q_0} t
&= f^{q_0}D^{qq_0+q_0}b
 - (D^1f)^{q_0}D^{qq_0}(b^q-b) \\*
&\qquad  - (D^qf)^{q_0}\ell D^{q_0+1}b
 - (D^{q+1}f)^{q_0}\ell D^1b \\
 D^{qq_0+2q_0} t
&= f^{q_0}D^{qq_0+2q_0}b
+ (D^1f)^{q_0} D^{qq_0 + q_0}b \\*
&\qquad - (D^qf)^{q_0}\ell D^{2q_0+1}b
- (D^{q+1}f)^{q_0}\ell D^{q_0+1}b . \\
 D^{qq_0+3q_0} t
&= f^{q_0}D^{qq_0+3q_0}b - (D^{q+1}f)^{q_0} \ell D^{2q_0+1} b \\
 D^{qq_0+q} t
&= f^{q_0}D^{qq_0+q}b
+ (D^1f)^{q_0}\ell^{q_0}D^{qq_0+q}(b^q) \\*
&\qquad - (D^{3q_0+1}f)^{q_0}\ell^{q_0}( D^{qq_0}b - D^{qq_0}(b^q) )
- (D^qf)^{q_0}D^q(b^q-b) \\*
&\qquad - (D^{q+3q_0}f)^{q_0} \ell D^1b
+ (D^qf^q)^{q_0} D^q(b^q) \\
 D^{qq_0+q+q_0}t
&= f^{q_0}D^{qq_0+q+q_0}b - (D^1f)^{q_0}D^{qq_0+q}(b^q-b) \\*
& - (D^{3q_0+1}f)^{q_0}( \ell^{q_0}D^{qq_0+q_0}b + D^{qq_0}(b^q-b) ) + (D^qf)^{q_0}D^{q+q_0}b \\*
& - (D^{q+1}f)^{q_0}D^q(b^q-b) + (D^{q+3q_0}f)^{q_0}D^{q_0}b - (D^{q+3q_0+1}f)^{q_0}\ell D^1b \\
 D^{qq_0+2q} t
&= f^{q_0} D^{qq_0+2q}b
+ (D^{3q_0+1}f)^{q_0} \ell^{q_0} D^{qq_0+q}(b^q-b)  \\*
&\qquad
- (D^qf)^{q_0}D^{2q}(b^q-b)  - (D^{q+3q_0}f)^{q_0} D^q(b^q-b) .
\end{align*}
We will also use a few of the values $D^ib$, which we collect in the following table:
\begin{equation}\label{btable}
\begin{tabular}{l|lllll}
$i$     &$D^iy$    &$D^iz$       &$D^iw_4$  \\ \hline
$1$     &$x^{q_0}$ &$x^{2q_0}$   &$-x^{2q_0+1}-x^{q_0}y - z$ \\
$q_0+1$ &1         &$-x^{q_0}$   & $x^{q_0+1}-y$ \\
$2q_0+1$&0         &1            & $-x^q$ \\
$q+1$   &0      &0            & $-\ell^{2q_0}$ \\
$q+q_0$ &$-1$   &$x^{q_0}$    & $\ell^{q_0+1} - x^{q_0+1} + y$ \\
$2q$    &0      &0            & $\ell^{2q_0}$ \\
$qq_0+1$ &0      &0            &$-\ell^{q+q_0}$ \\
$qq_0+q_0$ &0      &0            &$-\ell^{q+1}$
\end{tabular}
\end{equation}

Now we verify each of the equations \ref{de q+1}--\ref{de qq0+2q} in turn.
Since each $b_i$ associated with $w$ appears before $w$ in the list
\[
  x,\; y,\; z,\; w_4,\; w_5,\; w_7,\; w_9,\; w_{10},
\]
and since each of \ref{de q+1}--\ref{de qq0+2q} has already been verified for $x$, $y$, and $z$, we may assume by induction that these equations are satisfied by each $b_i$ which appears in the proof.

\begin{proof}[Proof of \ref{de q+1}]
We show that the desired equation holds for all $t$.
Since $D^{3q_0+1}b = 0$, we have
\begin{align*}
 \sum_{k=1}^3 \ell^{kq_0} D^{kq_0+1}t
 &= f^{q_0} \sum_{k=1}^3 \ell^{kq_0} D^{kq_0+1} b + (D^1f)^{q_0} \ell^{q_0} ( D^1b +  \sum_{k=1}^3 \ell^{kq_0} D^{kq_0+1} b ) \\
 &= f^{q_0}( \ell D^{q+1}b + D^qb) + (D^1f)^{q_0} \ell^{q_0} ( D^1b + D^qb + \ell D^{q+1}b ) \\
 &= f^{q_0}( \ell D^{q+1}b + D^qb) + (D^1f)^{q_0} \ell^{q_0} D^q(b^q) \\
 &= D^qt + \ell D^{q+1}t ,
\end{align*}
where we used \eqref{Dqbbq} in the third line.
\end{proof}

\begin{proof}[Proof of \ref{de q+2q0}]
Let
\[
  \Delta_t := \ell^{q_0}(D^{q+2q_0}t + D^{2q_0+1}t) - ( D^{q+q_0}t + D^{q_0+1}t ) .
\]
Here it is not the case that $\Delta_t = 0$ for all $t$, so we need to show that $\Delta_{t_1} = \Delta_{t_2}$ for each $w = t_1-t_2$.
The contribution to $\Delta_t$ of those terms involving $f^{q_0}$ is zero by our assumption on $b$.
Moreover, by \eqref{Dqbbq} the contributed coefficient of $(D^1f)^{q_0}$ is
\begin{align*}
 &\ell^{q_0}D^{q+q_0}b + \ell^{q_0} D^{q_0+1}b + D^q(b^q-b) - D^1b \\
 &\qquad = \ell^{q_0}D^{q+q_0}b + \ell^{q_0} D^{q_0+1}b + \ell D^{q+1}b ,
\end{align*}
which is zero by \eqref{btable}.
Therefore, the only terms giving a contribution to $\Delta_t$ are those involving $(D^{3q_0+1}f)^{q_0}$.
The coefficient of $(D^{3q_0+1}f)^{q_0}$ in $\Delta_t$ is
\[
 \sum_{k=0}^2 \ell^{kq_0+1}D^{kq_0+1}b
 = \ell (D^1b + D^qb + D^{q+1}b)
 = \ell D^q(b^q) ,
\]
where we have used the fact that $D^{3q_0+1}b = 0$, along with \eqref{Dqbbq} and \ref{de q+1}.
With the exception of $w = w_7$ we have $f_i^q - f_i = b_j^{3q_0}(x^q-x)$ for $i\neq j$, so that
\[
(D^{3q_0+1}f_i)^{q_0}
= (D^{3q_0}(b_j^{3q_0}))^{q_0}
= D^q(b_j^q) .
\]
Therefore, for $w \neq w_7$ we have
\[
  \Delta_{t_i} = D^q(b_j^q) \cdot \ell D^q(b_i^q) ,
\]
and so $\Delta_{t_1} = \Delta_{t_2}$ as desired.
Finally, for $w=w_7$ we check that
\[
D^q(y^q) \cdot \ell D^q(y^q)
= x^{2qq_0} \ell
= D^q(z^q) \cdot \ell D^q(x^q) . \qedhere
\]
\end{proof}

\begin{proof}[Proof of \ref{de q+3q0}]
We show that the desired equation holds for all $t$.
Since $D^{q+3q_0}b = 0$, we have
\begin{align*}
  \sum_{k=0}^3 \ell^{kq_0}D^{q+kq_0} t
&= f^{q_0} \sum_{k=0}^3 \ell^{kq_0} D^{q+kq_0}b \\
&\qquad + (D^1f)^{q_0} \ell^{q_0} \sum_{k=0}^3 \ell^{kq_0} D^{q+kq_0}b
+ (D^{3q_0+1}f)^{q_0} \cdot 0
= 0 . \qedhere
\end{align*}
\end{proof}

\begin{proof}[Proof of \ref{de 2q+q0}]
We show that the desired equation holds for all $t$.
The contribution of the terms involving $f^{q_0}$ is zero by our assumption on $b$.
By considering the contribution of terms involving $(D^1f)^{q_0}$ and $(D^{3q_0+1}f)^{q_0}$, it will suffice to show that the equations
\begin{align*}
  D^q(b^q-b) + D^1b &= \ell D^{2q}b  \\
  \ell^{q_0}D^{q_0}b + \ell D^1b &= \ell^{q_0+1}D^{q+q_0}b + \ell D^q(b^q-b)
\end{align*}
hold for $b = x,y,z,w_4$.
These may be rewritten using \eqref{Dqbbq} as
\begin{align*}
  D^{2q}b + D^{q+1}b = 0 \\
  \ell^{q_0}D^{q+q_0}b + \ell D^{q+1}b + \ell^{q_0}D^{q_0+1}b = 0 ,
\end{align*}
and these follow from \eqref{btable}.
\end{proof}

\begin{proof}[Proof of \ref{de 3q}]
We show that the desired equation holds for all $t$.
By our assumption on $b$, the contribution of all terms involving $f^{q_0}$ is zero.
By comparing the coefficients of $(D^{3q_0+1}f)^{q_0}$ and using \eqref{Dqbbq}, it suffices to show that
\[
  \ell D^{2q}b
  = D^1b - D^q(b^q-b)
  = -\ell D^{q+1}b .
\]
This follows from \eqref{btable}.
\end{proof}

\begin{proof}[Proof of \ref{de qq0+1}]
Let
\[
  \Delta_t
  = \ell D^{qq_0+1}t + D^{qq_0}t - \ell^q ( \ell^{q_0}D^{2q_0+1}t - D^{q_0+1}t) .
\]
By our assumption on $b$, the terms in $\Delta_t$ involving $f^{q_0}$ sum to zero.
Moreover, the terms involving $(D^qf)^{q_0}$ also give no contribution to $\Delta_t$.
After some simplification, the sum of the remaining terms is $\ell^q$ times
\[
(D^1f)^{q_0} D^1b
- ((D^1f)^{q_0}D^1b )^q
- (D^1f)^{q_0}
\ell^{q_0} (
D^{q_0 + 1} b
+ (D^{q_0+1} b)^q
) .
\]
Note that $(D^1f)^{q_0} = c^q$, where $f^q-f = c^{3q_0}(x^q-x)$ and $c \in \{x,y,z,w_4\}$.
Thus, $\Delta_t$ is $\ell^q$ times
\[
  c^q D^1b - (c^q D^1b)^q - c^q \ell^{q_0} (
D^{q_0 + 1} b
+ (D^{q_0+1} b)^q) .
\]
That $\Delta_1 = \Delta_2$ may now be checked in each case by using \eqref{btable}.
\end{proof}

\begin{proof}[Proof of \ref{de qq0+2q0}]
Let
\[
  \Delta_t
  = \ell^{2q_0}D^{qq_0+2q_0}t
  - \ell^{q_0}D^{qq_0+q_0}t
  - \ell^q D^{q_0+1}t
  - \ell^qD^{q+q_0}t .
\]
By our assumption on $b$, the contribution to $\Delta_t$ of those terms involving $f^{q_0}$ is zero.
The contribution of the terms involving $(D^1f)^{q_0}$ is
\begin{align*}
&\ell^{2q_0} D^{qq_0+q_0}b
+ \ell^{q_0}D^{qq_0}(b^q-b)
+ \ell^qD^q(b^q-b)
- \ell^qD^1b \\
&\qquad =
  \ell^{2q_0} D^{qq_0+q_0}b
  + \ell^{q_0+1} D^{qq_0+1}b
  + \ell^{q+1} D^{q+1}b .
\end{align*}
This is also zero by \eqref{btable}.
Then using \eqref{q+1}, we may rewrite the sum of the remaining terms as
\begin{multline*}
  \Delta_t
  = (\ell^{3q_0}D^{3q_0+1}f)^{q_0} \ell^{q_0}D^{q_0}b \\
  - (\ell^{3q_0}D^{3q_0+1}f)^{q_0} ( \ell^{2q_0}D^{2q_0}b + \ell^{q_0}D^{q_0}b - \ell D^1b ) \\
  + (\ell D^{q+1}f)^{q_0} ( \ell^{2q_0}D^{2q_0}b + \ell^{q_0}D^{q_0}b - \ell D^1b ).
\end{multline*}
As in the proof of \ref{de q+2q0}, we have
\[
  \ell^{2q_0}D^{2q_0}b + \ell^{q_0}D^{q_0}b - \ell D^1b = \ell D^q(b^q) .
\]
Furthermore, if $c \in \{x,y,z,w_4\}$ with $f^q-f = c^{3q_0}(x^q-x)$, then $(D^{3q_0+1}f)^{q_0} = D^q(c^q)$ and $(D^{q+1}f)^{q_0} = (D^{q_0}c)^q$.
Therefore,
\[
  \Delta_t
  = \ell^{q_0} D^q(c^q)D^{q_0}b
  - \ell D^q(c^q) D^q(b^q)
  + \ell (D^{q_0}c)^q D^q(b^q) .
\]
Now \eqref{btable} may be used to verify in each case that $\Delta_{t_1} = \Delta_{t_2}$.
\end{proof}

\begin{proof}[Proof of \ref{de qq0+3q0}]
By using \eqref{q+1}, the terms on the left hand side of the desired equation may be written as
\begin{align*}
\ell^{q_0}D^{qq_0+q_0} t
&= f^{q_0}\ell^{q_0}D^{qq_0+q_0}b
 - (D^1f)^{q_0} \ell^{q_0}D^{qq_0}(b^q-b) \\
 &\qquad - (\ell^{3q_0}D^{3q_0+1}f)^{q_0} \ell D^1b
 + (D^qf)^{q_0}( \ell^{q_0} D^{q_0}b + \ell D^1b ) \\
\ell^{2q_0}D^{qq_0+2q_0} t
&= f^{q_0}\ell^{2q_0}D^{qq_0+2q_0}b
+ (D^1f)^{q_0} \ell^{2q_0} D^{qq_0 + q_0}b \\
&\qquad + (\ell^{3q_0}D^{3q_0+1}f)^{q_0} \ell^{q_0} D^{q_0}b
+ (D^qf)^{q_0} ( \ell^{2q_0} D^{2q_0}b - \ell^{q_0} D^{q_0}b ) \\
\ell^{3q_0}D^{qq_0+3q_0} t
&= f^{q_0}\ell^{3q_0}D^{qq_0+3q_0}b \\
&\qquad+ (\ell^{3q_0}D^{3q_0+1}f)^{q_0} \ell^{2q_0}D^{2q_0}b
- (D^qf)^{q_0} \ell^{2q_0}D^{2q_0}b .
\end{align*}
By our assumption on $b$, the contribution of the terms involving $f^{q_0}$ is zero.
The terms involving $(D^qf)^{q_0}$ sum to zero.
The sum of the terms involving $(D^{3q_0+1}f)^{q_0}$ is $\ell^q$ times the quantity which was dealt with \ref{de q+2q0}, so by the same argument these give no contribution to $\Delta_{t_1} - \Delta_{t_2}$.
It remains only to show that the terms involving $(D^1f)^{q_0}$ sum to zero, i.e., that
\[
  \ell^{2q_0} D^{qq_0 + q_0}b
  = \ell^{q_0}D^{qq_0}(b^q-b)
  = \ell^{q_0+1} D^{qq_0+1} b .
\]
But this follows from \eqref{btable}.
\end{proof}

\begin{proof}[Proof of \ref{de qq0+q+q0}]
We show that the desired equation holds for all $t$.
Use \eqref{q+1} and \eqref{q+3q0} to write each side of the desired equation as
\begin{multline*}
 \ell^{q_0}D^{qq_0+q_0} t
= f^{q_0}\ell^{q_0}D^{qq_0+q_0}b
- (D^1f)^{q_0} \ell^{q_0}D^{qq_0}(b^q-b) \\
 - \ell^{q} (D^{3q_0+1}f)^{q_0} (b^q-b)
+ (D^qf)^{q_0}( \ell^{q_0} D^{q_0}b + (b^q-b) )
\end{multline*}
and
\begin{align*}
 \ell^{q+q_0} D^{qq_0+q+q_0}t &=
 f^{q_0} \ell^{q+q_0} D^{qq_0+q+q_0}b
- (D^1f)^{q_0} \ell^{q+q_0} D^{qq_0+q}(b^q-b) \\
  &- \ell^q (D^{3q_0+1}f)^{q_0} (
  \ell^{2q_0} D^{qq_0+q_0}b \\
  &\qquad+ \ell^{q_0} D^{qq_0}(b^q-b)
  + \ell^q D^q(b^q-b)
  - \ell D^1b ) \\
 &+ (D^qf)^{q_0} \left(
  \ell^{q+q_0} D^{q+q_0}b
  + \ell^q D^q(b^q-b)
  - \ell^{q_0} D^{q_0}b
  - \ell D^1b
  \right) .
\end{align*}
By our assumption on $b$, the contribution of the terms involving $f^{q_0}$ is zero.
After using \eqref{nu1} to rewrite the coefficients of $(D^1f)^{q_0}$, $\ell^q(D^{3q_0+1}f)^{q_0}$, and $(D^qf)^{q_0}$ completely in terms of derivatives of $b$ and doing some simplification, it will suffice to show that the equations
\begin{align*}
  D^{qq_0+1}b + \ell^q D^{qq_0+q+1}b
  &= 0  \\
  \ell^{q+1} D^{q+1}b
  + \ell^{2q_0} D^{qq_0+q_0}b
  + \ell^{q_0+1} D^{qq_0+1}b
  &= 0  \\
  \ell^{q_0}D^{q_0+1}b
  + \ell D^{q+1} b
  + \ell^{q_0}D^{q+q_0}b
  &= 0
\end{align*}
hold for $b = x,y,z,w_4$.
This is easily verified by consulting \eqref{btable}.
\end{proof}

\begin{proof}[Proof of \ref{de qq0+2q}]
We show that the desired equation holds for all $t$.
By using \eqref{q+3q0} to replace occurrences of $D^{qq_0+3q_0}f$ with $D^{q}f$, we find that
\begin{align*}
 \ell^q D^{qq_0+q} t &=
f^{q_0} \ell^q D^{qq_0+q}b
- (D^1f)^{q_0} \ell^{q_0}D^{qq_0}(b^q) \\
&\qquad
+ (D^{3q_0+1}f)^{q_0} \ell^{q+q_0} D^{qq_0}(b^q-b) \\
&\qquad - (D^qf)^{q_0}( \ell^q D^q(b^q-b) - \ell D^1b ) + (D^qf^q)^{q_0} \ell^q D^q(b^q) \\
 \ell^{2q} D^{qq_0+2q} t &=
f^{q_0} \ell^{2q} D^{qq_0+2q}b
+ (D^{3q_0+1}f)^{q_0} \ell^{2q+q_0} D^{qq_0+q}(b^q-b)  \\
&\qquad
+ (D^qf)^{q_0} ( \ell^q D^q(b^q-b) - \ell^{2q} D^{2q}(b^q-b) ) .
\end{align*}
By our assumption on $b$, the contribution of the terms involving $f^{q_0}$ is zero.
The terms involving $(D^1f)^{q_0}$ and $(D^qf^q)^{q_0}$ also give no contribution.

By comparing the coefficients of $(D^{3q_0+1}f)^{q_0}$ and $(D^qf)^{q_0}$ and doing some minor simplification, it will suffice to show that the equations
\begin{align*}
  \ell^{q+1}D^{qq_0+q}(b^q-b) &= (\ell^q-\ell) D^{qq_0}(b^q-b) \\
  \ell D^{2q}(b^q-b) + D^1b
  &=  D^q(b^q-b)
\end{align*}
hold for $b = x,y,z,w_4$.
Each of these is easily verified using \eqref{nu1} and \eqref{btable}.
\end{proof}

This completes the proof of Lemma \ref{diffeqns}, and hence of Theorem \ref{Dthm}.

\section{Weierstrass points}

As a consequence of Theorem \ref{Dthm}, we determine the Weierstrass points of $\mathcal D$.
Recall that the $\mathcal D$-Weierstrass points are those $P \in X$ satisfying $j_i(P) \neq \epsilon_i(P)$ for some $i$.
These points make up the support of a divisor $R_{\mathcal D}$ with
\[
  \deg R_{\mathcal D} = (2g-2) \sum \epsilon_i + (13+1) m .
\]

Since the sequence $\nu_i(\mathcal D)$ of Frobenius orders differs from $\epsilon_0,\ldots,\epsilon_{13}$, Corollary 2.10 of \cite{SV86} implies that every rational point of $X$ is a $\mathcal D$-Weierstrass point.
We claim that in fact $\operatorname{Supp} R_{\mathcal D} = X(\mathbb F_q)$.

\begin{corollary}
  The set of Weierstrass points of $\mathcal D$ consists of the $\mathbb F_q$-rational points of $X$.
\end{corollary}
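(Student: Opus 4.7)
The plan is as follows. Every $\mathbb{F}_q$-rational point is already a $\mathcal{D}$-Weierstrass point, as noted just above. To rule out any non-rational Weierstrass points, I would show that the $q^3+1$ rational points carry the entire ramification, i.e.\ that $\deg R_{\mathcal{D}}$ equals $(q^3+1)$ times the Weierstrass weight at a single rational point.

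The Ree group $\operatorname{Aut}(X) = {}^2G_2(q)$ preserves the $\operatorname{Aut}(X)$-invariant series $\mathcal{D}$ (hence the effective divisor $R_{\mathcal{D}}$), and acts transitively on $X(\mathbb{F}_q)$, so every rational point contributes the same Weierstrass weight $w$. It therefore suffices to compute $w$ at $P_\infty$. Because $\mathcal{B}$ consists of functions in $\mathcal{L}(mP_\infty)$ with fourteen distinct pole orders $0 = \rho_0 < \rho_1 < \cdots < \rho_{13} = m$ (the last equality using base-point-freeness of $\mathcal{D}$), the $(\mathcal{D}, P_\infty)$-orders are
\[
j_i(P_\infty) = m - \rho_{13 - i}, \qquad i = 0, \ldots, 13.
\]
Starting from $v_\infty(x) = -q$, $v_\infty(y) = -(q+q_0)$, $v_\infty(z) = -(q+2q_0)$ and proceeding inductively using \eqref{wi} together with \eqref{ped} (as in \cite{Pedersen91}), I would read off all fourteen pole orders and obtain the explicit list $j_0(P_\infty), \ldots, j_{13}(P_\infty)$, whence $w = \sum_{i=0}^{13}(j_i(P_\infty) - \epsilon_i)$.

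The final step is a numerical comparison: using the sum $\epsilon_0 + \cdots + \epsilon_{13} = 1 + 8q_0 + 5q + 8qq_0 + q^2$ from Theorem~\ref{Dthm}, together with $g = \tfrac{3}{2}q_0(q-1)(q+q_0+1)$, $m = 1 + 3q_0 + 2q + 3qq_0 + q^2$, and $q = 3q_0^2$, I would verify
\[
(q^3+1)\,w \;=\; (2g-2)(1 + 8q_0 + 5q + 8qq_0 + q^2) + 14(1 + 3q_0 + 2q + 3qq_0 + q^2).
\]
This is a routine polynomial identity in $q_0$ once $w$ is in hand. Once established, it forces every point outside $X(\mathbb{F}_q)$ to have zero $\mathcal{D}$-Weierstrass weight, so $\operatorname{Supp} R_{\mathcal{D}} = X(\mathbb{F}_q)$, as claimed.

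The main obstacle is the extraction of the pole orders $\rho_k$ from \eqref{wi}: the $w_i$ are engineered so that the naive leading terms of the individual summands cancel, and the true leading term must be recovered from the identities \eqref{ped} one level further down. This is precisely the analysis carried out in \cite{Pedersen91}, and once the table of fourteen pole orders is compiled the remainder of the argument is pure bookkeeping.
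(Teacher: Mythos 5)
Your proposal is correct and follows essentially the same route as the paper: both use transitivity of $\operatorname{Aut}(X)$ to reduce to a single rational point, compare $\sum_i\bigl(j_i(P)-\epsilon_i\bigr)$ against $\deg R_{\mathcal D}/(q^3+1)$, and conclude that the rational points exhaust the ramification divisor; the paper evaluates the $(\mathcal D,P)$-orders at the point $x=y=z=0$ via vanishing orders rather than at $P_\infty$ via pole orders, but this is immaterial. One small precision: St\"ohr--Voloch gives only the inequality $v_P(R_{\mathcal D})\geq\sum_i\bigl(j_i(P)-\epsilon_i\bigr)$ rather than the equality $w=\sum_i\bigl(j_i(P_\infty)-\epsilon_i\bigr)$ you assert, but your concluding degree count forces equality anyway, so the argument stands.
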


\begin{proof}

By Theorem \ref{Dthm}, we have
\[
  \deg R_{\mathcal D}
  =  (2g-2)\sum \epsilon_i + (13+1)m
  = (3qq_0+9q+23q_0+12) N ,
\]
and so it will suffice to show that $v_P(R) = 3qq_0+9q+23q_0+12$ for $P \in X(\mathbb F_q)$.
This will follow from the inequality
\begin{equation} \label{vPR}
  v_P(R_{\mathcal D})
  \geq \sum_{i=0}^r (j_i(P) - \epsilon_i) .
\end{equation}
Since the automorphism group acts doubly transitively on the $\mathbb F_q$-rational points of $X$, it will suffice to show this for the point $P_0$ with $x=y=z=0$.
By expanding out the functions in $\mathcal B$ in terms of $x$, we find that they vanish at $P_0$ to the orders
\begin{align*}
  &j_0 = 0,           &&j_7 = 1+3q_0+2q,        \\
  &j_1 = 1,           &&j_8 = 1+2q_0+q+qq_0,    \\
  &j_2 = 1+q_0,       &&j_9 = 1+3q_0+q+qq_0,    \\
  &j_3 = 1+2q_0,      &&j_{10} = 1+3q_0+2q+qq_0,   \\
  &j_4 = 1+3q_0,      &&j_{11} = 1+3q_0+2q+2qq_0,  \\
  &j_5 = 1+2q_0+q,    &&j_{12} = 1+3q_0+2q+3qq_0,  \\
  &j_6 = 1+3q_0+q,    &&j_{13} = 1+3q_0+2q+3qq_0+q^2.
\end{align*}
Inserting these values into \eqref{vPR} completes the proof.
\qedhere
\end{proof}

The same argument shows that the $\mathcal E$-Weierstrass points are exactly the $\mathbb F_q$-rational points as well.
In this case, $v_P(R_{\mathcal E}) = 3qq_0+4q+12q_0+5$ for $P \in X(\mathbb F_q)$.

\newpage

\section{Appendix}

In the following tables, an asterisk in row $i$ and column $f$ indicates that $i$ is in the set $S_f$ described in section 3.
In particular, $D^i f = 0$ wherever there is a blank entry in the table.
The first table involves functions of type 1 and the second involves functions of type 2.

\setlength\tabcolsep{2pt}

\footnotesize{

\begin{multicols}{2}

\begin{tabular}{|l|cccccc|}
$i$ &$x$ &$w_1$ &$w_2$ &$w_3$ &$w_6$ &$w_8$ \\
\hline
$0$                 &$*$ &$*$ &$*$ &$*$ &$*$ &$*$   \\
$1$                 &$*$ &$*$ &$*$ &$*$ &$*$ &$*$   \\
$3q_0$              &    &$*$ &$*$ &$*$ &$*$ &$*$   \\
$3q_0+1$            &    &$*$ &$*$ &$*$ &$*$ &$*$   \\
$q$                 &    &$*$ &$*$ &$*$ &$*$ &$*$   \\
$q+1$               &    &    &$*$ &$*$ &$*$ &$*$   \\
$q+3q_0$            &    &$*$ &$*$ &$*$ &$*$ &$*$   \\
$q+3q_0+1$          &    &    &$*$ &$*$ &$*$ &$*$   \\
$2q$                &    &    &$*$ &$*$ &$*$ &$*$   \\
$2q+1$              &    &    &    &$*$ &$*$ &$*$   \\
$2q+3q_0$           &    &    &$*$ &$*$ &$*$ &$*$   \\
$2q+3q_0+1$         &    &    &    &$*$ &$*$ &$*$   \\
$3q$                &    &    &    &$*$ &$*$ &$*$   \\
$3q+3q_0$           &    &    &    &$*$ &$*$ &$*$   \\
$3qq_0$             &    &$*$ &$*$ &$*$ &$*$ &$*$   \\
$3qq_0+1$           &    &    &$*$ &$*$ &$*$ &$*$   \\
$3qq_0+3q_0$        &    &    &    &    &$*$ &$*$   \\
$3qq_0+3q_0+1$      &    &    &    &    &$*$ &$*$   \\
\end{tabular}

\begin{tabular}{|l|cccccc|}
$i$ &$x$ &$w_1$ &$w_2$ &$w_3$ &$w_6$ &$w_8$ \\
\hline
$3qq_0+q$           &    &$*$ &$*$ &$*$ &$*$ &$*$   \\
$3qq_0+q+1$         &    &    &$*$ &$*$ &$*$ &$*$   \\
$3qq_0+q+3q_0$      &    &    &    &    &$*$ &$*$   \\
$3qq_0+q+3q_0+1$    &    &    &    &    &$*$ &$*$   \\
$3qq_0+2q$          &    &    &$*$ &$*$ &$*$ &$*$   \\
$3qq_0+2q+1$        &    &    &    &$*$ &$*$ &$*$   \\
$3qq_0+2q+3q_0$     &    &    &    &    &$*$ &$*$   \\
$3qq_0+2q+3q_0+1$   &    &    &    &    &$*$ &$*$   \\
$3qq_0+3q$          &    &    &    &$*$ &$*$ &$*$   \\
$3qq_0+3q+3q_0$     &    &    &    &    &$*$ &$*$   \\
$6qq_0$            &    &    &    &    &$*$ &$*$   \\
$6qq_0+1$          &    &    &    &    &$*$ &$*$   \\
$6qq_0+q$          &    &    &    &    &$*$ &$*$   \\
$6qq_0+q+1$        &    &    &    &    &$*$ &$*$   \\
$6qq_0+2q$         &    &    &    &    &$*$ &$*$   \\
$6qq_0+2q+1$       &    &    &    &    &$*$ &$*$   \\
$6qq_0+3q$         &    &    &    &    &$*$ &$*$   \\
$q^2$               &    &$*$ &$*$ &$*$ &$*$ &$*$   \\
\end{tabular}

\end{multicols}

\begin{multicols}{2}

\begin{tabular}{|l|ccccccc|}
$i$ &$y$ &$z$ &$w_4$ &$w_7$ &$w_5$ &$w_9$ &$w_{10}$ \\
\hline
$0$                 &$*$ &$*$ &$*$ &$*$ &$*$ &$*$ &$*$   \\
$1$                 &$*$ &$*$ &$*$ &$*$ &$*$ &$*$ &$*$   \\
$q_0$               &$*$ &$*$ &$*$ &$*$ &$*$ &$*$ &$*$   \\
$q_0+1$             &$*$ &$*$ &$*$ &$*$ &$*$ &$*$ &$*$   \\
$2q_0$              &    &$*$ &$*$ &$*$ &$*$ &$*$ &$*$   \\
$2q_0+1$            &    &$*$ &$*$ &$*$ &$*$ &$*$ &$*$   \\
$3q_0$              &    &    &    &    &$*$ &$*$ &$*$   \\
$3q_0+1$            &    &    &    &    &$*$ &$*$ &$*$   \\
$q$                 &$*$ &$*$ &$*$ &$*$ &$*$ &$*$ &$*$   \\
$q+1$               &    &    &$*$ &$*$ &$*$ &$*$ &$*$   \\
$q+q_0$             &$*$ &$*$ &$*$ &$*$ &$*$ &$*$ &$*$   \\
$q+q_0+1$           &    &    &$*$ &$*$ &$*$ &$*$ &$*$   \\
$q+2q_0$            &    &$*$ &$*$ &$*$ &$*$ &$*$ &$*$   \\
$q+2q_0+1$          &    &    &$*$ &$*$ &$*$ &$*$ &$*$   \\
$q+3q_0$            &    &    &    &    &$*$ &$*$ &$*$   \\
$q+3q_0+1$          &    &    &    &    &$*$ &$*$ &$*$   \\
$2q$                &    &    &$*$ &$*$ &$*$ &$*$ &$*$   \\
$2q+1$              &    &    &    &    &    &$*$ &$*$   \\
$2q+q_0$            &    &    &$*$ &$*$ &$*$ &$*$ &$*$   \\
$2q+2q_0$           &    &    &$*$ &$*$ &$*$ &$*$ &$*$   \\
$2q+3q_0$           &    &    &    &    &$*$ &$*$ &$*$   \\
$2q+3q_0+1$         &    &    &    &    &    &$*$ &$*$   \\
$3q$                &    &    &    &    &    &$*$ &$*$   \\
$3q+3q_0$           &    &    &    &    &    &$*$ &$*$   \\
$qq_0$              &$*$ &$*$ &$*$ &$*$ &$*$ &$*$ &$*$   \\
$qq_0+1$            &    &    &$*$ &$*$ &$*$ &$*$ &$*$   \\
$qq_0+q_0$          &    &    &$*$ &$*$ &$*$ &$*$ &$*$   \\
$qq_0+q_0+1$        &    &    &$*$ &$*$ &$*$ &$*$ &$*$   \\
\end{tabular}

\begin{tabular}{|l|ccccccc|}
$i$ &$y$ &$z$ &$w_4$ &$w_7$ &$w_5$ &$w_9$ &$w_{10}$ \\
\hline
$qq_0+2q_0$         &    &    &    &$*$ &$*$ &$*$ &$*$   \\
$qq_0+2q_0+1$       &    &    &    &$*$ &$*$ &$*$ &$*$   \\
$qq_0+3q_0$         &    &    &    &    &$*$ &$*$ &$*$   \\
$qq_0+3q_0+1$       &    &    &    &    &$*$ &$*$ &$*$   \\
$qq_0+q$            &$*$ &$*$ &$*$ &$*$ &$*$ &$*$ &$*$   \\
$qq_0+q+1$          &    &    &$*$ &$*$ &$*$ &$*$ &$*$   \\
$qq_0+q+q_0$        &    &    &$*$ &$*$ &$*$ &$*$ &$*$   \\
$qq_0+q+q_0+1$      &    &    &$*$ &$*$ &$*$ &$*$ &$*$   \\
$qq_0+q+2q_0$       &    &    &    &$*$ &$*$ &$*$ &$*$   \\
$qq_0+q+2q_0+1$     &    &    &    &$*$ &$*$ &$*$ &$*$   \\
$qq_0+q+3q_0$       &    &    &    &    &$*$ &$*$ &$*$   \\
$qq_0+q+3q_0+1$     &    &    &    &    &$*$ &$*$ &$*$   \\
$qq_0+2q$           &    &    &$*$ &$*$ &$*$ &$*$ &$*$   \\
$qq_0+2q+1$         &    &    &    &    &    &$*$ &$*$   \\
$qq_0+2q+q_0$       &    &    &$*$ &$*$ &$*$ &$*$ &$*$   \\
$qq_0+2q+2q_0$      &    &    &    &$*$ &$*$ &$*$ &$*$   \\
$qq_0+2q+3q_0$      &    &    &    &    &$*$ &$*$ &$*$   \\
$qq_0+2q+3q_0+1$    &    &    &    &    &    &$*$ &$*$   \\
$qq_0+3q$           &    &    &    &    &    &$*$ &$*$   \\
$qq_0+3q+3q_0$      &    &    &    &    &    &$*$ &$*$   \\
$2qq_0$             &    &$*$ &$*$ &$*$ &$*$ &$*$ &$*$   \\
$2qq_0+1$           &    &    &$*$ &$*$ &$*$ &$*$ &$*$   \\
$2qq_0+q_0$         &    &    &    &$*$ &$*$ &$*$ &$*$   \\
$2qq_0+q_0+1$       &    &    &    &$*$ &$*$ &$*$ &$*$   \\
$2qq_0+2q_0$        &    &    &    &    &$*$ &$*$ &$*$   \\
$2qq_0+2q_0+1$      &    &    &    &    &$*$ &$*$ &$*$   \\
$2qq_0+3q_0$        &    &    &    &    &    &    &$*$   \\
$2qq_0+3q_0+1$      &    &    &    &    &    &    &$*$   \\
\end{tabular}
\end{multicols}

}

\bibliography{Ree_orders}{}
\bibliographystyle{plain}

\end{document}